\newtheorem{theorem}{Theorem}[section]
\newtheorem{proposition}[theorem]{Proposition}
\newtheorem{corollary}[theorem]{Corollary}
\newtheorem{example}[theorem]{Example}
\numberwithin{equation}{section}
\newcommand{\SU}{\textrm{SU}(2)}
\def\jp#1{{\left\langle{#1}\right\rangle}}
\begin{document}
\title{Regularity of solutions to a Vekua-type equation on compact Lie groups}


\author[Wagner A.A. de Moraes]{Wagner Augusto Almeida de Moraes}
\address{
	Ghent University, 
	Department of Mathematics: Analysis, Logic and Discrete Mathematics,
	Krijgslaan 281, Building S8, 
	B 9000 Ghent, Belgium. 
	ORCID 0000-0002-5624-7374}
	\email{wagneramat@gmail.com, wagneraugusto.almeidademoraes@ugent.be}

\thanks{This study was financed in part by the Coordenação de Aperfeiçoamento de Pessoal de Nível Superior - Brasil (CAPES) - Finance Code 001 and by the Methusalem programme of the Ghent University Special Research Fund (BOF) (Grant number 01M01021).}


\date{\today}
	\subjclass[2020]{35H10,  35R03, 43A80}

\keywords{
	Global hypoellipticity  \and Global solvability \and Fourier series \and Vekua operators \and Compact Lie groups.}

\begin{abstract}
We present sufficient conditions to have global hypoellipticity for a class of Vekua-type operators defined on a compact Lie group. When the group has the property that every non-trivial representation is not self-dual we show that these sufficient conditions are also necessary. We also present results about the global solvability for this class of operators.
\end{abstract}

\maketitle

\section{Introduction and preliminaries results}

In this paper, we are interested in the study of global properties for a class of operators defined on a compact Lie group $G$. We will present necessary and sufficient conditions to obtain the global hypoellipticity and the global solvability of the operator $P: C^\infty(G) \to C^\infty(G)$ given by
\begin{equation}\label{operatorP}
Pu:= Xu-qu-p\overline{u},
\end{equation}
where $X$ is a normalized vector field on $G$ and $p,q \in \mathbb{C}$, with $p \neq 0$. The case $p=0$ was studied in \cite{KMR20b}. Notice that when $p \neq 0$ the operator $P$ is $\mathbb{R}$-linear but is not $\mathbb{C}$-linear. We say that $P$ is globally hypoelliptic if the conditions $u\in \mathcal{D}'(G)$ and $Pu \in C^\infty(G)$ imply $u\in C^\infty(G)$. 

The local solvability for classes of such operators was considered in \cite{Mez1} and \cite{Mez2} and the global solvability
on the torus $\mathbb{T}^2$ was studied in \cite{BDM14} in the case where $G=\mathbb{T}^2$.

Our results were inspired by the article \cite{BDM14} by A. Bergamasco, P. Dattori, and  A. Meziani, where the authors consider the case $G=\mathbb{T}^2$ and they use Fourier analysis to obtain their results. Since the Fourier analysis on compact Lie groups is well developed, we were able to give sufficient and necessary conditions for the global hypoellipticity of the operator $P$ given in \eqref{operatorP} and for the necessary condition we were led to assume that every non-trivial representation of $G$ is not self-dual. With this same assumption we have discussed the global solvability of $P$, but using a different notion of solvability from the one considered on \cite{BDM14}.

In \cite{RTW14}, M. Ruzhansky, V. Turunen, and J. Wirth have given sufficient conditions for the global hypoellipticity for a class of pseudo-differential operators on compact Lie groups in terms of their matrix-valued full symbols, that will be defined on \eqref{symbol}. This approach works well in the case where $p=0$ (see \cite{KMR20b}) because in this case, the operator $P$ is linear. When $p \neq 0$, even $P$ being a left-invariant operator, its symbol depends on $x\in G$, so we do not have the expected property $\widehat{Pu}(\xi) = \sigma_P(\xi) \widehat{u}(\xi)$, for every $[\xi] \in \widehat{G}$.
For this reason, our approach will be done by analyzing the Fourier coefficients of $Pu$ at $\xi$ and $\bar{\xi}$ to obtain a suitable expression that relates $Pu$ and $u$.

 In Section \ref{suf-GH} we present sufficient conditions for the global hypoellipticity of the operator $P$. In Section \ref{nec-GH} we present necessary conditions for the global hypoellipticity of the operator $P$ when $G$ has the property that its non-trivial representations are not self-dual. In Section \ref{sectionGS} we discuss the global solvability of the operator $P$ with the same additional hypothesis from Section \ref{nec-GH}. In Section \ref{operatoronsu2} we study the operator $P$ on $\SU$, which is a compact Lie group where every representation is self-dual.

Throughout this paper, we will use the notation and results about Fourier analysis on compact Lie groups  based on the book by M. Ruzhansky and V. Turunen \cite{livropseudo} to study the global hypoellipticity and the global solvability of the operator $P$ defined on \eqref{operatorP}.

Let $G$ be a compact Lie group and let $\textrm{Rep}(G)$  be the set of continuous irreducible unitary representations of $G$. By the compactness of $G$ every $\phi \in \textrm{Rep}(G)$ has finite dimension and it can be viewed as a matrix-valued function $\phi: G \to \mathbb{C}^{d_\phi\times d_\phi}$, where $d_\phi = \dim \phi$. We say that $\phi \sim \psi$ if there exists an unitary matrix $A\in \mathbb{C}^{d_\phi \times d_\phi}$ such that $A\phi(x) =\psi(x)A$, for all $x\in G$. We will denote by $\widehat{G}$ the quotient of $\textrm{Rep}(G)$ by this equivalence relation.
Given $\phi \in \textrm{Rep}(G)$, we define its conjugate representation $\overline{\phi}: G \to \mathbb{C}^{d_\phi\times d_\phi}$ as $\overline{\phi}(x) := \overline{\phi(x)}$ and we have that $\overline{\phi} \in \textrm{Rep}(G)$. When $\phi \sim \overline{\phi}$ we say that $\phi$ is a self-dual representation.

For $f \in L^1(G)$ we define the group Fourier transform of $f$ at $\phi \in \textrm{Rep}(G)$ is
\begin{equation*}
	\widehat{f}(\phi)=\int_G f(x) \phi(x)^* \, dx,
\end{equation*}
where $dx$ is the normalized Haar measure on $G$.
By the Peter-Wyel theorem, we have that 
\begin{equation}\label{ortho}
	\mathcal{B} := \left\{\sqrt{\dim \phi} \, \phi_{ij} \,; \ \phi=(\phi_{ij})_{i,j=1}^{d_\phi}, [\phi] \in \widehat{G} \right\},
\end{equation}
is an orthonormal basis for $L^2(G)$, where we pick only one matrix unitary representation in each class of equivalence, and we may write
\begin{equation*}
	f(x)=\sum_{[\phi]\in \widehat{G}}d_\phi {\textrm{Tr}}(\phi(x)\widehat{f}(\phi)).
\end{equation*}
and the Plancherel formula holds:
\begin{equation}
	\label{plancherel} \|f\|_{L^{2}(G)}=\left(\sum_{[\phi] \in \widehat{G}}  d_\phi \ 
	\|\widehat{f}(\phi)\|_{\mathtt{HS}}^{2}\right)^{{1}/{2}}=:
	\|\widehat{f}\|_{\ell^{2}(\widehat{G})},
\end{equation}
where 
\begin{equation*} \|\widehat{f}(\phi)\|_{\mathtt{HS}}^{2}={\textrm{Tr}}(\widehat{f}(\phi)\widehat{f}(\phi)^{*})=\sum_{i,j=1}^{d_\phi}  \bigr|\widehat{f}(\phi)_{ij}\bigr|^2.
\end{equation*}

For each $[\phi] \in \widehat{G}$, its matrix elements are eigenfunctions of the Laplace-Beltrami operator $\mathcal{L}_G$ correspondent to the same eigenvalue $-\nu_{[\phi]}$, where $\nu_{[\phi]} \geq 0$. Thus
$$
-\mathcal{L}_G \phi_{ij}(x) = \nu_{[\phi]}\phi_{ij}(x), \quad \textrm{for all } 1 \leq i,j \leq d_\phi,
$$
and we will denote by
$$
\jp \phi := \left(1+\nu_{[\phi]}\right)^{1/2}
$$
the eigenvalues of $(I-\mathcal{L}_G)^{1/2}.$

It is possible to characterize smooth functions and distributions on $G$ by relations between their Fourier coefficients at $\phi$ and $\jp{\phi}$. Precisely, we have

\begin{enumerate}
	\item  $f \in C^\infty(G)$ if and only if for each $N>0$, there exists $C_N > 0$ such that
	\begin{equation}\label{cinfty}
			|\widehat{f}(\phi)_{ij}| \leq C_N \jp{\phi}^{-N}, 
	\end{equation}
	for all $[\phi] \in \widehat{G}$ and $1\leq i,j \leq d_\phi$.
	\item $u \in \mathcal{D}'(G)$ if and only if there exist $C, \, N > 0$ such that 
\begin{equation}\label{distr}
	|\widehat{u}(\phi)_{ij}| \leq C \jp{\phi}^{N},
\end{equation}
	for all $[\phi] \in \widehat{G}$ and $1\leq i,j \leq d_\phi$, where $\widehat{u}(\phi)_{ij} := \jp{u,\overline{\phi_{ji}}}$.
\end{enumerate}

For $x\in G$, $X\in \mathfrak{g}$ and $f\in C^\infty(G)$, we define 
$$
Xf(x):=\frac{d}{dt} f(x\exp(tX))\bigg|_{t=0}.
$$
Notice that $X\overline{f}(x) = \overline{Xf(x)}$, for all $x\in G$, $X\in \mathfrak{g}$ and $f\in C^\infty(G)$.

The  symbol of a continuous linear operator $P: C^{\infty}(G) \to C^{\infty}(G)$ in $x\in G$ and $\phi \in \mbox{{Rep}}(G)$, $\phi=(\phi_{ij})_{i,j=1}^{d_\phi}$ is defined by 
\begin{equation}
	\label{symbol}
	\sigma_P(x,\phi) := \phi(x)^*(P\phi)(x) \in \mathbb{C}^{d_\phi \times d_\phi},
\end{equation}
where $(P\phi)(x)_{ij}:= (P\phi_{ij})(x)$, for all $1\leq i,j \leq d_\phi$, and we have the following quantization
$$
Pf(x) = \sum_{[\phi] \in \widehat{G}} \dim (\phi) \mbox{Tr} \left(\phi(x)\sigma_P(x,\phi)\widehat{f}(\phi)\right)
$$
of the operator $P$, for every $f \in C^\infty(G)$ and $x\in G$.
When $P: C^\infty(G) \to C^\infty(G)$ is a continuous linear left-invariant operator its symbol $\sigma_P$ is independent of $x\in G$ and
$$
\widehat{Pf}(\phi) = \sigma_P(\phi)\widehat{f}(\phi),
$$
for all $f \in C^\infty(G)$ and $[\phi] \in \widehat{G}$.

Let $X\in \mathfrak{g}$ be a vector field normalized by the norm induced by the Killing form. It is easy to see that the operator $iX$ is a left-invariant symmetric operator on $L^2(G)$. Thus, for all $[\phi] \in \widehat{G}$ we can choose a representative $\phi$ such that $\sigma_{iX}(\phi)$ is a diagonal matrix, with entries $\lambda_m \in \mathbb{R}$, $1 \leq m \leq d_\phi$. By the linearity of the symbol, we obtain
$$
\sigma_X(\phi)_{mn} = i\lambda_m \delta_{mn}, \quad \lambda_j \in \mathbb{R}.
$$
In this case we have by \eqref{symbol} that $\sigma_X(\overline{\phi}) = \overline{\sigma_X(\phi)}$,  that is, 
$$
\sigma_X(\overline{\phi})_{mn} = -i\lambda_m \delta_{mn}.
$$
Moreover, we have
$$
|\lambda_m(\phi)| \leq \jp{\phi},
$$
for all $[\phi] \in \widehat{G}$ and $1 \leq m \leq d_\phi$.


\section{Sufficient conditions for the global hypoellipticity}\label{suf-GH}
Let $G$ be a compact Lie group and $X \in \mathfrak{g}$ be a normalized vector field. In this section we deal with the global hypoellipticity of the operator $P:C^\infty(G) \to C^\infty(G)$ given by
\begin{equation*}\label{vekua2}
	Pu:= Xu- qu-p\bar{u},
\end{equation*}
where $p,q\in \mathbb{C}$, with $p \neq 0$. The case $p=0$ was studied in \cite{KMR20b}.
For $u \in \mathcal{D}'(G)$, we define its conjugate $\bar{u}$ by
$$
\jp{\bar{u}, \varphi} := \overline{ \jp{u,\bar{\varphi}}}, \quad \varphi \in C^\infty(G)
$$
and it is clear that $\bar{u} \in \mathcal{D}'(G)$. Hence, we can extend the operator $P$ to the space of distributions $\mathcal{D}'(G)$.
 
We recall that the operator $P$ is globally hypoelliptic if the conditions $u\in \mathcal{D}'(G)$ and $Pu\in C^\infty(G)$ imply  $u \in C^\infty(G)$.

Our goal is to obtain necessary and sufficient conditions for the global hypoellipticity of $P$ and for this we will analyze the behavior of the Fourier coefficients of $Pu$.

First, for each $[\xi] \in \widehat{G}$ we choose a representative such that 
$$
\sigma_X(\xi)_{mn} = i \lambda_m(\xi)\delta_{mn} \quad \text{and} \quad \sigma_X(\bar{\xi})_{mn} = -i \lambda_m(\xi)\delta_{mn},
$$
for $1 \leq m,n \leq d_\xi$, where $\delta_{mn}$ is the Kronecker's delta. This assumption means that if $[\xi] \not\sim [\overline{\xi}]$, then the representatives chosen for each one of this equivalence classes satisfy $\overline{\xi}(x) = \overline{\xi(x)}$, for all $x\in G$. Notice that with this choice we have $\lambda_m(\bar{\xi}) = -\lambda_m(\xi)$, for all $[\xi] \in \widehat{G}$, $1 \leq m \leq d_\xi$.

Hence, for $[\xi] \in \widehat{G}$ we obtain
\begin{align*}
	\widehat{Pu}(\xi)_{mn} &= \widehat{Xu}(\xi)_{mn} - q\widehat{u}(\xi)_{mn} - p\widehat{\bar{u}}(\xi)_{mn} \\
		&=\left[\sigma_X(\xi)\widehat{u}(\xi)\right]_{mn}- q\widehat{u}(\xi)_{mn} - p\widehat{\bar{u}}(\xi)_{mn}  \\
		&=(i\lambda_m(\xi)-q)\widehat{u}(\xi)_{mn} - p\widehat{\bar{u}}(\xi)_{mn} 
\end{align*}
Similarly, 
\begin{align*}
	\widehat{Pu}(\bar{\xi})_{mn} &= \widehat{Xu}(\bar{\xi})_{mn} - q\widehat{u}(\bar{\xi})_{mn} - p\widehat{\bar{u}}(\bar{\xi})_{mn} \\
	&=\left[\sigma_X(\bar{\xi})\widehat{u}(\bar{\xi})\right]_{mn}- q\widehat{u}(\bar{\xi})_{mn} - p\overline{\widehat{u}({\xi})_{mn}} \\
	&=(i\lambda_m(\bar{\xi})-q)\widehat{u}(\bar{\xi})_{mn} - p\overline{\widehat{u}({\xi})_{mn}} \\
	&=(-i\lambda_m({\xi})-q)\widehat{u}(\bar{\xi})_{mn} - p\overline{\widehat{u}({\xi})_{mn}}
\end{align*}
Now, using the fact that $\widehat{\bar{u}}(\xi) = \overline{\widehat{u}(\bar{\xi})}$, we obtain the following system of equations
\begin{equation}\label{system}
\left\{
\begin{array}{rrr}
		\widehat{Pu}(\xi)_{mn} =&(i\lambda_m(\xi)-q)\widehat{u}(\xi)_{mn}& - p\widehat{\bar{u}}(\xi)_{mn} \\
		\widehat{\overline{Pu}}(\xi)_{mn} =&- \bar{p}{\widehat{u}({\xi})_{mn}}&+(i\lambda_m({\xi})-\bar{q})\widehat{\bar{u}}(\xi)_{mn}
\end{array} 
\right.
\end{equation}
Denote
\begin{equation}\label{delta-def}
\Delta(\xi)_m:= -\lambda_m(\xi)^2+|q|^2-|p|^2-i2\lambda_m(\xi)\textrm{Re}(q).
\end{equation}
and by \eqref{system} we obtain
\begin{equation}\label{relation-sol}
	\Delta(\xi)_m \widehat{u}(\xi)_{mn} = (i\lambda_m(\xi)-\bar{q})\widehat{Pu}(\xi)_{mn}+p\widehat{\overline{Pu}}(\xi)_{mn}
\end{equation}
\begin{theorem}\label{thm-suf}
The operator $P:\mathcal{D}'(G)\to \mathcal{D}'(G)$ given by
$$
Pu:=Xu-qu-p\bar{u}
$$
 is globally hypoelliptic if one of the following statements holds
\begin{enumerate}
	\item $|p|>|q|$;
	\item $|p|<|q|$ and $\mathrm{Re}(q) \neq 0$;
	\item $\exists M>0$ such that
	$$
\jp{\xi} \geq M \implies 	\Big|\lambda_m(\xi)^2-(|q|^2-|p|^2)\Big| \geq \jp{\xi}^{-M},
	$$
	for all $1 \leq m \leq d_\xi$.
\end{enumerate}
\end{theorem}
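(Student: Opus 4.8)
The plan is to invert the relation \eqref{relation-sol} one Fourier coefficient at a time. Suppose $u\in\mathcal{D}'(G)$ and $Pu\in C^\infty(G)$. Since pointwise complex conjugation preserves smoothness, $\overline{Pu}\in C^\infty(G)$ as well, so by \eqref{cinfty} both $\widehat{Pu}(\xi)_{mn}$ and $\widehat{\overline{Pu}}(\xi)_{mn}$ decay faster than any power of $\jp{\xi}$; together with $|\lambda_m(\xi)|\le\jp{\xi}$, this makes the right-hand side of \eqref{relation-sol} an $O(\jp{\xi}^{1-N})$ for every $N>0$. Hence it suffices to establish a lower bound of the form
\begin{equation}\label{deltalower}
	|\Delta(\xi)_m|\ge c_0\,\jp{\xi}^{-M_0},\qquad \jp{\xi}\ge R,\ 1\le m\le d_\xi,
\end{equation}
for some constants $c_0>0$, $M_0\ge 0$, $R>0$: dividing in \eqref{relation-sol} then gives $|\widehat{u}(\xi)_{mn}|\le C_N\jp{\xi}^{1+M_0-N}$ for $\jp{\xi}\ge R$, while the set $\{[\xi]\in\widehat{G}:\jp{\xi}<R\}$ is finite (the spectrum of $I-\mathcal{L}_G$ is discrete with finite multiplicities) so the remaining coefficients of the distribution $u$ are bounded; by \eqref{cinfty} we conclude $u\in C^\infty(G)$.

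It remains to verify \eqref{deltalower} in each of the three cases. Put $t:=\lambda_m(\xi)^2\ge 0$, $c:=|q|^2-|p|^2$ and $a:=\mathrm{Re}(q)$; from \eqref{delta-def} one has $\mathrm{Re}(\Delta(\xi)_m)=-(t-c)$ and $\mathrm{Im}(\Delta(\xi)_m)=-2\lambda_m(\xi)a$, so
$$
	|\Delta(\xi)_m|^2=(t-c)^2+4a^2t .
$$
If $|p|>|q|$, then $c<0$ and hence $|\Delta(\xi)_m|\ge|\mathrm{Re}(\Delta(\xi)_m)|=t+|c|\ge|c|=|p|^2-|q|^2$, so \eqref{deltalower} holds with $M_0=0$. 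If $|p|<|q|$ and $\mathrm{Re}(q)\neq 0$, then $c>0$ and $a\neq 0$: for $t\le c/2$ we have $(t-c)^2\ge c^2/4$, and for $t> c/2$ we have $4a^2t>2a^2c$, hence $|\Delta(\xi)_m|^2\ge\min\{c^2/4,\,2a^2c\}>0$ and again $M_0=0$ works. Finally, under (3) the hypothesis reads exactly $|t-c|\ge\jp{\xi}^{-M}$ whenever $\jp{\xi}\ge M$, so $|\Delta(\xi)_m|\ge|\mathrm{Re}(\Delta(\xi)_m)|\ge\jp{\xi}^{-M}$, i.e.\ \eqref{deltalower} with $c_0=1$ and $M_0=R=M$.

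The whole argument rests on the identity \eqref{relation-sol}, which has already been derived; after that, the only genuine computation is the elementary two-case estimate of $|\Delta(\xi)_m|^2$ under hypothesis (2), which I do not expect to cause any real difficulty. The one point that deserves to be stated carefully is that smoothness of $Pu$ transfers to $\overline{Pu}$, since this is what allows \eqref{cinfty} to be applied to the term $p\,\widehat{\overline{Pu}}(\xi)_{mn}$ appearing on the right-hand side of \eqref{relation-sol}.
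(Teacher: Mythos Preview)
Your proposal is correct and follows essentially the same approach as the paper: both invert \eqref{relation-sol} by establishing a lower bound on $|\Delta(\xi)_m|$, using $|\mathrm{Re}\,\Delta|$ for cases (1) and (3) and the same two-case split on $t=\lambda_m(\xi)^2$ versus $c/2$ for case (2), together with finiteness of $\{[\xi]:\jp{\xi}<R\}$. The only cosmetic difference is your abbreviated notation $t,c,a$ and the explicit formula $|\Delta|^2=(t-c)^2+4a^2t$, which streamlines the presentation but changes nothing of substance.
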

\begin{proof}
	Let $u\in \mathcal{D}'(G)$ such that $Pu=f \in C^\infty(G)$. Let us prove that $u\in C^\infty(G)$ if one of the three statements above holds. Since $f \in C^\infty(G)$, for all $N>0$ there exists $C_N>0$ such that 
\begin{equation}\label{f-smooth}
	|\widehat{f}(\xi)_{mn}| \leq C_N\jp{\xi}^{-N}, \quad 1 \leq m,n \leq d_\xi, \ [\xi]\in \widehat{G}, 
\end{equation}
	and the same estimate is valid for $\overline{f}$. 
	
	Assume that 1. holds, that is, $|p|>|q|$. In this case, we have
	$$
	|\Delta(\xi)_m| \geq |\mathrm{Re}(\Delta(\xi)_m)| = |-\lambda_m(\xi)^2+|q|^2-|p|^2|\geq |p|^2-|q|^2>0.
	$$
	By \eqref{relation-sol} we obtain 
	\begin{align*}
		|\widehat{u}(\xi)_{mn} |&\leq \frac{1}{|\Delta(\xi)_m|}\left(|i\lambda_m(\xi)-\bar{q} ||\widehat{f}(\xi)_{mn}| + |p||\widehat{\overline{f}}(\xi)_{mn}|\right)\\
		&\leq \frac{1}{|p|^2-|q|^2}(|\lambda_m(\xi)||\widehat{f}(\xi)_{mn}|+|\bar{q}||\widehat{f}(\xi)_{mn}|+|p||\widehat{\overline{f}}(\xi)_{mn}|)\\
		&\leq \frac{1}{|p|^2-|q|^2}(\jp{\xi}|\widehat{f}(\xi)_{mn}|+|{q}||\widehat{f}(\xi)_{mn}|+|p||\widehat{\overline{f}}(\xi)_{mn}|).
	\end{align*}
Hence, given $N>0$ we obtain from \eqref{f-smooth}
$$
|\widehat{u}(\xi)_{mn} |\leq\left(\frac{C_{N+1}+C_N(|q|+|p|)}{|p|^2-|q|^2}\right) \jp{\xi}^{-N}
$$
and we conclude that $u\in C^\infty(G)$ by \eqref{cinfty}.

Assume now that 2. holds, that is, $|p|<|q|$ and $\mathrm{Re}(q) \neq 0$. If $|\lambda_m(\xi)|^2 \leq \frac{1}{2}(|q|^2-|p|^2)$ then
$$
|\Delta(\xi)_m|^2 \geq |\mathrm{Re}(\Delta(\xi)_m)|^2 = (-\lambda_m(\xi)^2+|q|^2-|p|^2)^2 \geq \frac{1}{4}(|q|^2-|p|^2)^2 >0.
$$
On the other hand, if $|\lambda_m(\xi)|^2 > \frac{1}{2}(|q|^2-|p|^2)$ then
$$
|\Delta(\xi)_m|^2 \geq |\mathrm{Im}(\Delta(\xi)_m)|^2 = 4\mathrm{Re}(q)^2\lambda_m(\xi)^2 \geq 2\mathrm{Re}(q)^2(|q|^2-|p|^2) >0.
$$
Thus we  obtain $C>0$ such that
$$
|\Delta(\xi)_m| \geq C,
$$
for all $[\xi]\in \widehat{G}$, and $1 \leq m \leq d_\xi$. Analogously to the previous case we conclude that $u \in C^\infty(G)$.

Finally, assume that 3. holds, so there exists $M>0$ such that 	$$\Big|\lambda_m(\xi)^2-(|q|^2-|p|^2)\Big| \geq \jp{\xi}^{-M},$$
for all $1 \leq m \leq d_\xi$, whenever $\jp{\xi}\geq M$. Hence, for $\jp{\xi}\geq M$ we have
$$
|\Delta(\xi)_m| \geq |\mathrm{Re}(\Delta(\xi)_m)| = |-\lambda_m(\xi)^2+|q|^2-|p|^2| \geq \jp{\xi}^{-M}.
$$
Following the same steps as the first case, we obtain
$$
	|\widehat{u}(\xi)_{mn} | \leq \jp{\xi}^M(\jp{\xi}|\widehat{f}(\xi)_{mn}|+|{q}||\widehat{f}(\xi)_{mn}|+|p||\widehat{\overline{f}}(\xi)_{mn}|).
	$$
	Using the fact that $f\in C^{\infty}(G)$, for $N>0$ we have
	$$
		|\widehat{u}(\xi)_{mn} | \leq(C_{N+M+1} + C_{N+M}(|q|+|p|))\jp{\xi}^{-N},
	$$
	for all $1 \leq m,n \leq d_\xi$, whenever $\jp{\xi} \geq M$. Since the set
	$$
	\{[\xi] \in \widehat{G}; \jp{\xi} <M \}
	$$
	is finite, we can obtain $C'_N>0$ for each $N>0$, such that 
	$$
		|\widehat{u}(\xi)_{mn} | \leq C'_N\jp{\xi}^{-N},
	$$
		for all $[\xi]\in \widehat{G}$, and $1 \leq m,n \leq d_\xi$. Therefore $u\in C^\infty(G)$ and the theorem is proved.
\end{proof}
\begin{example}
Let $G$ be a compact Lie group and let $X \in \mathfrak{g}$. Consider the operator $P_1$ defined by
$$
P_1u:= Xu-iu-2\overline{u},
$$
that is, $p=2$ and $q=i$.  From the condition 1 of Theorem \ref{thm-suf} the operator $P_1$ is globally hypoelliptic. Consider now the operator $P_2$ given by
$$
P_2u:= Xu-6u-(3+4i)\overline{u},
$$
that is, $p=3+4i$ and $q=6$. From the condition 2 of Theorem \ref{thm-suf} the operator $P_2$ is globally hypoelliptic. 
\end{example}
To give an example where we need to apply the condition 3 of Theorem \ref{thm-suf} we need to know the behavior of the symbol of $X$. For the next example, we will follow the notation and results from \cite{RT13} and Chapter 11 of \cite{livropseudo} to analyze an operator defined on $\SU$. 
\begin{example}\label{ex-su2}
Let $G=\SU$ and let $\widehat{\textrm{SU}(2)}$ be the unitary dual of $\textrm{SU}(2)$, that is, $\widehat{\textrm{SU}(2)}$ consists of equivalence classes $[\textsf{t}^\ell]$ of continuous irreducible unitary representations
$\textsf{t}^\ell:\textrm{SU}(2)\to \mathbb{C}^{(2\ell+1)\times (2\ell+1)}$, $\ell\in\frac12\mathbb{N}_{0}$,
of matrix-valued functions satisfying
$\textsf{t}^\ell(xy)=\textsf{t}^\ell(x)\textsf{t}^\ell(y)$ and $\textsf{t}^\ell(x)^{*}=\textsf{t}^\ell(x)^{-1}$ for all
$x,y\in\textrm{SU}(2)$.
We will use the standard convention of enumerating the matrix elements
$\textsf{t}^\ell_{mn}$ of $\textsf{t}^\ell$ using indices $m,n$ ranging between
$-\ell$ to $\ell$ with step one, i.e. we have $-\ell\leq m,n\leq\ell$
with $\ell-m, \ell-n\in\mathbb{N}_{0}.$ For $\ell \in \tfrac{1}{2} \mathbb{N}_0$ we have
$$
\jp{\ell} := \jp{\textsf{t}^\ell} = \sqrt{1+\ell(\ell+1)}. 
$$

Let $X=\partial_0$, where $\partial_0$ is the neutral operator. There is no loss of generality assuming that the vector field is $\partial_0$ because all vector fields define on $\textrm{SU}(2)$ can be conjugated to $\partial_0$  and this conjugation does not affect lower order terms. We have that
$
\sigma_{\partial_0}(\ell)_{mn}=im\delta_{mn},
$
for all $\ell\in \frac{1}{2} \mathbb{N}_0$, that is, 
$$\lambda_m(\ell) = m, $$ for all $\ell \in \frac{1}{2}\mathbb{N}_0$, $-\ell \leq m \leq \ell$ with $\ell -m \in \mathbb{N}_0$.

Consider the operator $P_3$ given by
$$
P_3u:= \partial_0 u - iu - \frac{1}{\sqrt{3}} \overline{u},
$$
that is, $p=\frac{1}{\sqrt{3}}$ and $q=i$. Notice that
$$
\left|m^2-\left(1-\frac{1}{3}\right)\right|  = \left|m^2-\frac{2}{3}\right| > \frac{1}{3},
$$
for all $m \in \frac{1}{2} \mathbb{Z}$. Therefore, the operator $P_3$ is globally hypoelliptic by the condition 3 of Theorem \ref{thm-suf}.
\end{example}

\section{Necessary conditions for the global hypoellipticity}\label{nec-GH}
The Theorem \ref{thm-suf} gives us sufficient conditions for the global hypoellipticity of the operator $P$. Our next result says that these conditions are also necessary for the global hypoellipticity of $P$ in the case where we have
$[\xi] \neq [\bar{\xi}],$
for all non-trivial $[\xi] \in \widehat{G}$. The next proposition gives us the first necessary condition for the global hypoellipticity of $G$.
\begin{proposition}\label{lemma-nec}
Assume that $[\xi] \neq [\bar{\xi}],$
for all non-trivial $[\xi] \in \widehat{G}$.	If $\Delta(\xi)_m=0$ for infinitely many $[\xi]\in \widehat{G}$, $1 \leq m \leq d_\xi$, then $P$ is not globally hypoelliptic.
\end{proposition}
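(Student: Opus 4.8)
The plan is to build explicitly a distribution $u\in\mathcal{D}'(G)\setminus C^\infty(G)$ with $Pu=0$; by definition this shows that $P$ is not globally hypoelliptic. The key observation is that $\Delta(\xi)_m$ in \eqref{delta-def} is exactly the determinant of the $2\times 2$ coefficient matrix of the linear system \eqref{system},
\[
M(\xi)_m:=\begin{pmatrix} i\lambda_m(\xi)-q & -p\\ -\bar p & i\lambda_m(\xi)-\bar q\end{pmatrix},
\]
in the unknowns $\widehat{u}(\xi)_{mn}$ and $\widehat{\bar u}(\xi)_{mn}$. So whenever $\Delta(\xi)_m=0$ the matrix $M(\xi)_m$ is singular, and since $p\neq 0$ it is nonzero, hence has a one-dimensional kernel spanned by some vector $(a,b)^{\top}$.

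First I would turn the hypothesis into a usable infinite family. Since $d_\xi<\infty$, the assumption "$\Delta(\xi)_m=0$ for infinitely many $[\xi]\in\widehat{G}$, $1\le m\le d_\xi$" yields infinitely many pairwise distinct non-trivial classes $[\xi]$, each with a chosen index $m=m(\xi)$ for which $\Delta(\xi)_{m}=0$. Using the involution $[\xi]\mapsto[\bar\xi]$ together with the standing assumption $[\xi]\neq[\bar\xi]$, I would pass to a subsequence $\{[\xi_k]\}_{k\ge 1}$ such that all the classes $[\xi_1],[\bar\xi_1],[\xi_2],[\bar\xi_2],\dots$ are distinct; this is precisely where the hypothesis of the proposition is used, as it permits prescribing Fourier coefficients at $\xi_k$ and at $\bar\xi_k$ independently. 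For each $k$, fix $m_k$ with $\Delta(\xi_k)_{m_k}=0$ and a kernel vector $(a_k,b_k)^{\top}$ of $M(\xi_k)_{m_k}$, normalized so that $|a_k|=1$. Here one should record that $\lambda_{m_k}(\xi_k)^2=|q|^2-|p|^2$ is a fixed number, so $\lambda_{m_k}(\xi_k)$ takes only finitely many values; reading the kernel relation off the first row (noting $i\lambda_{m_k}(\xi_k)-q\neq 0$, since otherwise one would force $p=0$) gives $b_k=(i\lambda_{m_k}(\xi_k)-q)a_k/p$, whence $|b_k|$ lies in a fixed finite set of positive numbers.

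Next I would define $u$ through its group Fourier transform: choosing an arbitrary column index $n_k$, set $\widehat{u}(\xi_k)_{m_k n_k}:=a_k$ and $\widehat{u}(\bar\xi_k)_{m_k n_k}:=\overline{b_k}$, and declare every other entry of every $\widehat{u}(\phi)$ to be zero. All these coefficients are bounded by a constant, so $u\in\mathcal{D}'(G)$ by \eqref{distr}; on the other hand $|\widehat{u}(\xi_k)_{m_k n_k}|=1$ does not decay while $\jp{\xi_k}\to\infty$, because the $[\xi_k]$ are distinct and each set $\{[\phi]\in\widehat{G}:\jp{\phi}\le R\}$ is finite, so $u\notin C^\infty(G)$ by \eqref{cinfty}. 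Finally, using $\widehat{\bar u}(\xi)=\overline{\widehat{u}(\bar\xi)}$ and $\lambda_m(\bar\xi)=-\lambda_m(\xi)$, I would check from \eqref{system} that $\widehat{Pu}$ vanishes identically: at the entry $(m_k,n_k)$ of $\xi_k$ the two equations of \eqref{system} are exactly the two scalar components of $M(\xi_k)_{m_k}(a_k,b_k)^{\top}=0$; at the entry $(m_k,n_k)$ of $\bar\xi_k$ they are the complex conjugates of those same two identities; and every remaining entry vanishes because the relevant coefficients of $u$ and $\bar u$ are zero. Hence $Pu\equiv 0\in C^\infty(G)$, and $P$ fails to be globally hypoelliptic.

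I expect the only genuinely delicate part to be the conjugate-representation bookkeeping — keeping the chosen representatives and the identity $\widehat{\bar u}(\xi)=\overline{\widehat{u}(\bar\xi)}$ consistent while verifying $Pu=0$ at both $\xi_k$ and $\bar\xi_k$ — rather than any analytic estimate. The normalization of the kernel vectors, which in a more general construction could be an issue, is harmless here because $\lambda_{m_k}(\xi_k)^2$ is forced to equal the constant $|q|^2-|p|^2$, so $(a_k,b_k)$ ranges over only finitely many directions.
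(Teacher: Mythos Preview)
Your proof is correct and follows essentially the same approach as the paper: both construct a singular solution of $Pu=0$ by prescribing Fourier coefficients at the pairs $(\xi_k,\bar\xi_k)$ according to a null vector of the $2\times2$ coefficient matrix of system~\eqref{system}. The paper simply writes down the explicit kernel vector $(i\lambda_{m_j}(\xi_j)-\bar q,\ \bar p)$ (equivalently $\widehat u(\xi_j)_{m_jn}=i\lambda_{m_j}(\xi_j)-\bar q$, $\widehat u(\bar\xi_j)_{m_jn}=p$) rather than speaking abstractly of $\ker M(\xi_k)_{m_k}$, and it uses all column indices $n$ instead of a single $n_k$; your observation that $\lambda_{m_k}(\xi_k)^2=|q|^2-|p|^2$ is fixed is a clean way to see the boundedness of the coefficients, which the paper obtains directly from $|\lambda_m(\xi)|\le\jp{\xi}$.
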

\begin{proof}
	Let $\{ [\xi_j]\}_{j\in\mathbb{N}}$ be a sequence in $\widehat{G}$ such that $\Delta(\xi_j)_{m_j}=0$, for some $1 \leq m_j \leq d_{\xi_j}$. We may assume that $\xi_j \neq \overline{\xi_k}$, for any $j,k \in \mathbb{N}$.
	Since
	$$
	\Delta(\xi)_m= -\lambda_m(\xi)^2+|q|^2-|p|^2-i2\lambda_m(\xi)\textrm{Re}(q),
	$$
and $\lambda_m(\bar{\xi}) = -\lambda_m(\xi)$, we have
$$
\Delta(\bar{\xi})_m = \overline{\Delta(\xi)_m},
$$
	for all $[\xi] \in \widehat{G}$, $ 1 \leq m \leq d_\xi$. In particular, $\Delta(\bar{\xi_j})_{m_j}=0$, for all $j\in \mathbb{N}$. Define
	$$
		\widehat{u}(\xi)_{mn} = 
		\left\{
\begin{array}{ll}
	i\lambda_{m_j}(\xi_j)-\bar{q}, & \text{if } [\xi]=[\xi_j], \ m=m_j\\
	p, & \text{if } [\xi]=[\overline{\xi_j}], \ m=m_j\\
	0, & \text{otherwise}.
\end{array}
	\right.
	$$
	Notice that $\widehat{u}(\xi)_{mn}$ is well--defined because our assumption $[\xi] \neq [\bar{\xi}]$, for all non-trivial representation. Let us prove that $\widehat{u}(\xi)_{mn}$ is the Fourier coefficient of a distribution $u\in\mathcal{D}'(G)$ that is not a smooth function. Indeed, we have
	$$
	|\widehat{u}(\xi_j)_{mn}| \leq |i\lambda_m(\xi_j)-\bar{q}| \leq |\lambda_m(\xi_j|)| + |q| \leq (1+|q|)\jp{\xi_j},
	$$
	for all $1 \leq m,n \leq d_{\xi_j}$, and
	$$
		|\widehat{u}(\bar{\xi_j})_{mn}| \leq |p| \leq |p| \jp{\bar{\xi_j}},
	$$
		for all $1 \leq m,n \leq d_{\bar{\xi_j}}$.	Hence for $C=\max\{1+|q|, |p|\}$, we have
	$$
	|\widehat{u}(\xi)_{mn}| \leq C \jp{\xi},
	$$
	for all $[\xi] \in \widehat{G}$, $1 \leq m,n \leq d_\xi$, which implies that $u \in \mathcal{D}'(G)$ by \eqref{distr}. Notice that
	$$
	|\widehat{u}(\bar{\xi_j})_{m_jn}| = |p| \neq 0,
	$$
	for all $j \in \mathbb{N}$, so we conclude that $u \in \mathcal{D}'(G) \setminus C^\infty(G)$ because \eqref{cinfty} does not hold.
	
	Let us show now that $Pu=0$. We know that
	$$
	\widehat{Pu}(\xi)_{mn} = (i\lambda_m(\xi)-q)\widehat{u}(\xi)_{mn} - p \overline{\widehat{u}(\bar{\xi})_{mn}}.
	$$
It is enough to show that $\widehat{Pu}(\xi_j)_{m_jn} = \widehat{Pu}(\bar{\xi_j})_{m_jn}=0$, for all $j \in \mathbb{N}$. Indeed, we have
	\begin{align*}
		\widehat{Pu}(\xi_j)_{m_jn} & = (i\lambda_{m_j}(\xi_j)-q)\widehat{u}(\xi_j)_{mn} - p \overline{\widehat{u}(\bar{\xi_j})_{mn}} \\
		& = (i\lambda_{m_j}(\xi_j)-q) (i\lambda_{m_j}(\xi_j) -\bar{q})- p \bar{p}\\
		&=-\lambda_{m_j}(\xi_j)^2 +|q|^2-|p|^2 - i2\mathrm{Re}(q)\lambda_{m_j}(\xi_j)\\
		& = \Delta(\xi_j)_{m_j} \\
		&= 0.
	\end{align*}
Moreover,
\begin{align*}
	\widehat{Pu}(\bar{\xi_j})_{m_jn} & = (i\lambda_{m_j}(\bar{\xi_j})-q)\widehat{u}(\bar{\xi_j})_{mn} - p  \overline{\widehat{u}({\xi_j})_{mn}}\\
	& = (i\lambda_{m_j}(\bar{\xi_j})-q) p- p \overline{(i\lambda_{m_j}({\xi_j}) -\bar{q})} \\
	& = (-i\lambda_{m_j}({\xi_j})-q) p- p {(-i\lambda_{m_j}({\xi_j}) -{q})} \\
	&= 0.
\end{align*}
Therefore  $Pu=0$ and we conclude that $P$ is not globally hypoelliptic.
\end{proof}
Notice that to construct a singular solution for the equation $Pu=0$ in the proof of the last proposition we define the Fourier coefficient of $u$ at $[\xi_j]$ and $[\overline{\xi_j}]$ and this was possible because our assumption that $[\xi] \neq [\overline{\xi}]$, for all $[\xi] \in \widehat{G}$. If $[\eta] = [\overline{\eta}]$ for some $[\eta]\in \widehat{G}$, then there exists a unitary matrix $A$ such that $A\eta(x)=\overline{\eta(x)}A$, for all $x\in G$, witch implies that 
$A\widehat{u}(\eta)=\widehat{u}(\overline{\eta})A$. In this case, we could not define indiscriminately the values of $\widehat{u}$ at $\eta$ and at $\overline{\eta}$. In Section \ref{operatoronsu2} we will present this result for a group that does not satisfy this hypothesis.
\begin{theorem}\label{gh-nec}
	Assume that $[\xi] \neq [\bar{\xi}],$
	for all non-trivial $[\xi] \in \widehat{G}$. If $P$ is globally hypoelliptic, then one of the following conditions holds:
	\begin{enumerate}
		\item $|p|>|q|$;
		\item $|p|<|q|$ and $\mathrm{Re}(q) \neq 0$;
		\item $\exists M>0$ such that
		$$
		\jp{\xi} \geq M \implies 	\Big|\lambda_m(\xi)^2-(|q|^2-|p|^2)\Big| \geq \jp{\xi}^{-M},
		$$
		for all $1 \leq m \leq d_\xi$.
	\end{enumerate}
\end{theorem}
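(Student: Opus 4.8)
The plan is to prove the contrapositive: assuming that none of the three conditions (1), (2), (3) holds, we construct a distribution $u \in \mathcal{D}'(G) \setminus C^\infty(G)$ with $Pu \in C^\infty(G)$, thereby showing $P$ is not globally hypoelliptic. The negation of (1)--(3) splits into two cases according to whether $|p| \geq |q|$ or $|p| < |q|$. If $|p| \geq |q|$, then since (1) fails we must have $|p| = |q|$; if $|p| < |q|$, then since (2) fails we have $\mathrm{Re}(q) = 0$. In either situation one checks that $|q|^2 - |p|^2 \geq 0$ and that $\mathrm{Im}(\Delta(\xi)_m) = -2\lambda_m(\xi)\mathrm{Re}(q)$ either vanishes identically (when $\mathrm{Re}(q)=0$) or is controlled; the upshot is that $|\Delta(\xi)_m|$ is governed essentially by $|\lambda_m(\xi)^2 - (|q|^2-|p|^2)|$. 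Since (3) also fails, for every $M > 0$ there exist $[\xi]$ with $\jp{\xi} \geq M$ and some $1 \leq m \leq d_\xi$ with $|\lambda_m(\xi)^2 - (|q|^2-|p|^2)| < \jp{\xi}^{-M}$.

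Next I would extract from the failure of (3) a sequence. Taking $M = j$ for $j \in \mathbb{N}$, we obtain $[\xi_j] \in \widehat{G}$ with $\jp{\xi_j} \geq j$ (so $\jp{\xi_j} \to \infty$, and in particular the $[\xi_j]$ are pairwise distinct and nontrivial, and we may pass to a subsequence so that $\xi_j \neq \overline{\xi_k}$ for all $j,k$, using the standing hypothesis $[\xi]\neq[\bar\xi]$) and an index $m_j$ with $|\lambda_{m_j}(\xi_j)^2 - (|q|^2-|p|^2)| < \jp{\xi_j}^{-j}$. Combined with the case analysis above, this forces $|\Delta(\xi_j)_{m_j}| \leq C \jp{\xi_j}^{-j}$ for a constant $C$ depending only on $p,q$ (in the $\mathrm{Re}(q)=0$ case $|\Delta(\xi_j)_{m_j}| = |\lambda_{m_j}(\xi_j)^2 - (|q|^2-|p|^2)|$ exactly; in the $|p|=|q|$ case $\Delta(\xi)_m = -\lambda_m(\xi)^2 - 2i\lambda_m(\xi)\mathrm{Re}(q) = -\lambda_m(\xi)(\lambda_m(\xi)+2i\mathrm{Re}(q))$, so smallness of $\lambda_{m_j}(\xi_j)^2$ — here $|q|^2-|p|^2=0$ — forces $\lambda_{m_j}(\xi_j)$ small and hence $\Delta(\xi_j)_{m_j}$ small, with the rate $\jp{\xi_j}^{-j/2}$, which still decays faster than any polynomial).

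Then I would define the candidate distribution by its Fourier coefficients, mimicking the construction in Proposition \ref{lemma-nec} but now using the \emph{near}-kernel rather than the exact kernel. Set
$$
\widehat{u}(\xi)_{mn} = \begin{cases} i\lambda_{m_j}(\xi_j) - \bar q, & [\xi]=[\xi_j],\ m=m_j, \\ p, & [\xi]=[\overline{\xi_j}],\ m=m_j, \\ 0, & \text{otherwise},\end{cases}
$$
for one fixed $n$ in each block (say $n=m_j$), the coefficients being unambiguously defined exactly because $[\xi]\neq[\bar\xi]$ for nontrivial $[\xi]$. As in Proposition \ref{lemma-nec}, the bound $|\widehat{u}(\xi)_{mn}| \leq (1+|p|+|q|)\jp{\xi}$ gives $u \in \mathcal{D}'(G)$ via \eqref{distr}, while $|\widehat{u}(\overline{\xi_j})_{m_j m_j}| = |p| \neq 0$ together with $\jp{\xi_j}\to\infty$ shows \eqref{cinfty} fails, so $u \notin C^\infty(G)$. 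Finally one computes $\widehat{Pu}(\xi)_{mn}$ from \eqref{system}: it vanishes except on the blocks $[\xi_j], [\overline{\xi_j}]$, and there a direct calculation identical to the one in Proposition \ref{lemma-nec} yields $\widehat{Pu}(\xi_j)_{m_j n} = \Delta(\xi_j)_{m_j}$ and $\widehat{Pu}(\overline{\xi_j})_{m_j n} = 0$ (the second identity being exact, independent of whether $\Delta$ vanishes). Since $|\widehat{Pu}(\xi_j)_{m_j n}| = |\Delta(\xi_j)_{m_j}| \leq C\jp{\xi_j}^{-j}$ and all other coefficients are zero, for any $N$ one gets $|\widehat{Pu}(\xi)_{mn}| \leq C_N \jp{\xi}^{-N}$ for all $[\xi]$ (handling the finitely many $\xi_j$ with $j < N$ separately), so $Pu \in C^\infty(G)$ by \eqref{cinfty}. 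This contradicts global hypoellipticity, completing the proof.

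The main obstacle I anticipate is the bookkeeping in the case split: making sure that the failure of exactly conditions (1) and (2) really does reduce the size of $|\Delta(\xi)_m|$ to being controlled by $|\lambda_m(\xi)^2 - (|q|^2-|p|^2)|$ (so that failure of (3) genuinely produces a rapidly decaying sequence of $\Delta(\xi_j)_{m_j}$), and in particular checking that in the $|p|=|q|$ subcase the imaginary part $-2\lambda_m(\xi)\mathrm{Re}(q)$ does not spoil the decay — which works because smallness of $\lambda_m(\xi)^2$ forces smallness of $\lambda_m(\xi)$ itself. Everything else is a routine adaptation of Proposition \ref{lemma-nec}.
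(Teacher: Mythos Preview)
Your proposal is correct and in fact streamlines the paper's argument. Both proofs share the same overall architecture: negate (1)--(3), reduce to the two cases $|p|=|q|$ or ($|p|<|q|$ with $\mathrm{Re}(q)=0$), extract a sequence $[\xi_j]$ with $|\Delta(\xi_j)_{m_j}|$ decaying rapidly, and then build a singular $u$ with $Pu$ smooth. The difference lies in the construction of $u$. You reuse the \emph{asymmetric} Ansatz from Proposition~\ref{lemma-nec} verbatim (value $i\lambda_{m_j}(\xi_j)-\bar q$ at $[\xi_j]$, value $p$ at $[\overline{\xi_j}]$), which immediately gives $\widehat{Pu}(\xi_j)_{m_jn}=\Delta(\xi_j)_{m_j}$ and $\widehat{Pu}(\overline{\xi_j})_{m_jn}=0$, and the non-smoothness of $u$ is trivial because $|\widehat u(\overline{\xi_j})_{m_jn}|=|p|\neq 0$. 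The paper instead uses a \emph{symmetric} Ansatz, setting $\widehat u(\xi)_{mn}=i\lambda_m(\xi)-\bar q+p$ at both $[\xi_k]$ and $[\overline{\xi_k}]$; this still yields $Pu=f$ with $\widehat f=\Delta$, but showing $u\notin C^\infty(G)$ then requires a further three-way case split on whether $\mathrm{Re}(p-\bar q)\neq 0$, $\mathrm{Im}(p-\bar q)\neq 0$, or $p=\bar q$ (with a modified construction in the last case). Your route avoids that entirely. A minor bonus of your version is that it does not need to invoke Proposition~\ref{lemma-nec} separately to dispose of the case $\Delta(\xi_j)_{m_j}=0$: your construction handles vanishing and near-vanishing $\Delta$ uniformly.
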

\begin{proof}
	Let us prove by contradiction. If none of the assumptions are satisfied, then we have two possibilities:
	\begin{enumerate}[(a)]
		\item $|p|<|q|, \mathrm{Re}(q) =0$, and 3. does not hold;
		\item $|p|=|q|$ and 3. does not hold.
	\end{enumerate}

If (a) holds, then
$$
\Delta(\xi)_m=-\lambda_m(\xi)^2+|q|^2-|p|^2,
$$
and for every $k \in \mathbb{N}$, there exists $[\xi_k] \in \widehat{G}$ such that $\jp{\xi_k} \geq k$ and 
$$
|\Delta(\xi_k)_{m_k}| = |-\lambda_{m_k}(\xi_k)^2+|q|^2-|p|^2| \leq \jp{\xi_k}^{-k},
$$
for some $1 \leq m_k \leq d_{\xi_k}$. Since $\Delta(\bar{\xi_k})_{m_k} = \overline{\Delta(\xi_k)_{m_k}}$ and $\jp{\xi_k} = \jp{\bar{\xi_k}}$, we also have
$$
|\Delta(\bar{\xi_k})_{m_k}|  \leq \jp{\xi_k}^{-k}.
$$

If (b) holds, we have
$$
\Delta(\xi)_m=-\lambda_m(\xi)^2+i2\mathrm{Re}(q)\lambda_m(\xi),
$$
and for every $\ell \in \mathbb{N}$, there exists $[\xi_{\ell}] \in \widehat{G}$ such that $\jp{\xi_{\ell}} \geq \ell$ and 
\begin{equation}\label{lambda-estimate}
|\lambda_{m_\ell}(\xi_{\ell})| \leq \jp{\xi_{\ell}}^{-\ell},
\end{equation}
for some $1 \leq m_\ell \leq d_{\xi_{\ell}}$. In particular we have $|\lambda_{m_\ell}(\xi_{\ell})| \leq 1$, for all $\ell \in \mathbb{N}$. Notice that
$$
|\Delta(\xi_\ell)_{m_\ell}|=|\lambda_{m_\ell}(\xi_\ell)|(|\lambda_{m_\ell}(\xi_\ell)| + 2|\mathrm{Re}(q)|) \leq (1+2|\mathrm{Re}(q)|)\jp{\xi_{\ell}}^{-\ell}
$$
for all $\ell \in \mathbb{N}$, and again the same is true for the dual representation $\bar{\xi_{\ell}}$. 

Hence, in both cases there exist a sequence $\{[\xi_k] \}_{k \in \mathbb{N}}$ in $\widehat{G}$ and $C>0$ such that $\jp{\xi_k} \geq k$ and 
\begin{equation}\label{estimate-f}
|\Delta(\xi_k)_{m_k}| \leq C \jp{\xi_k}^{-k}, \quad k \in \mathbb{N},
\end{equation}
for some $ 1 \leq m_k \leq d_{\xi_k}$.
From now we will treat both cases altogether. Moreover, since $P$ is globally hypoelliptic we may assume, by Proposition \ref{lemma-nec}, that there exists $L>0$ such that $\Delta(\xi_k)_{m_k} \neq 0$, for all $k\geq L$. Thus
$$
0 < |\Delta(\xi_k)_{m_k}| \leq C\jp{\xi_k}^{-k}, \quad k \geq L.
$$
Define
	$$
\widehat{f}(\xi)_{mn} = 
\left\{
\begin{array}{ll}
\Delta(\xi)_m & \text{if } [\xi]=[\xi_k] \text{ or }  [\xi]=[\bar{\xi_k}], \ m=m_k\\
	0, & \text{otherwise}.
\end{array}
\right.
$$
By \eqref{estimate-f} we have that $\{\widehat{f}(\xi)_{mn}\}$ are the Fourier coefficients of a function $f \in C^\infty(G)$. Let us construct now a singular solution for $Pu=f$. Define
	$$
\widehat{u}(\xi)_{mn} = 
\left\{
\begin{array}{ll}
	i\lambda_m(\xi)-\bar{q}+p & \text{if } [\xi]=[\xi_k] \text{ or }  [\xi]=[\bar{\xi_k}], \ m=m_k\\
	0, & \text{otherwise}.
\end{array}
\right.
$$
Notice that
$$
|\widehat{u}(\xi_k)_{m_kn}| \leq |\lambda_{m_k}(\xi_k)|+|p-\bar{q}| \leq C \jp{\xi_k},
$$
for all $k \in \mathbb{N}$, which implies that $u \in \mathcal{D}'(G)$. Moreover,
\begin{align*} 
	\widehat{Pu}(\xi_k)_{m_kn} &=(i\lambda_{m_k}(\xi_k)-q)\widehat{u}(\xi_k)_{m_kn} - p\overline{\widehat{{u}}(\bar{\xi_k})_{m_kn}} \\
	&=(i\lambda_{m_k}(\xi_k)-q)(i\lambda_{m_k}(\xi_k)-\bar{q}+p)-p\overline{(	i\lambda_{m_k}(\bar{\xi_k})-\bar{q}+p)}\\
	&=\Delta(\xi_k)_{m_k}\\
	&=\widehat{f}(\xi_k)_{m_kn}.
\end{align*}
Therefore $\widehat{Pu}(\xi)_{mn} = \widehat{f}(\xi)_{mn}$, for all $[\xi]\in \widehat{G}$, $1 \leq m,n \leq d_\xi$. It remains to show that $u \notin C^\infty(G)$. There are three situations to be consider:

\noindent {\bf(I)} $\mathrm{Re}(p-\bar{q}) \neq 0$.

Notice that
$$
|\widehat{u}(\xi_k)_{m_kn}| \geq |\mathrm{Re}(\widehat{u}(\xi_k)_{m_kn})| = |\mathrm{Re}(p-\bar{q})|>0,
$$
for all $k \in \mathbb{N}$, which implies that $u \notin C^\infty(G)$.

\noindent {\bf(II)} $\mathrm{Re}(p-\bar{q}) = 0$ and $\mathrm{Im}(p-\bar{q}) \neq 0$.

Since $\lambda_{m_k}(\bar{\xi_k}) = -\lambda_{m_k}(\xi_k)$, we may assume without loss of generality that
$$
\mathrm{sgn} (\lambda_{m_k}(\xi_k)) = \mathrm{sgn}(\mathrm{Im}(p-\bar{q})),
$$
when $\lambda_{m_k}(\xi_k) \neq 0$.
Hence,
$$
|\widehat{u}(\xi_k)_{m_kn}| \geq |\mathrm{Im}(\widehat{u}(\xi_k)_{m_kn})| = |\lambda_{m_k}(\xi_k)+\mathrm{Im}(p-\bar{q})|>|\mathrm{Im}(p-\bar{q})|>0,
$$
for all $k \in \mathbb{N}$, and again we conclude that $u \notin C^\infty(G)$.

\noindent {\bf(III)} $p=\bar{q}$.

For this case, let us construct a singular solution for $Pu=if$.  Define  
	$$
\widehat{u}(\xi)_{mn} = 
\left\{
\begin{array}{ll}
	-\lambda_m(\xi)-i2p & \text{if } [\xi]=[\xi_k] \text{ or }  [\xi]=[\bar{\xi_k}], \ m=m_k\\
	0, & \text{otherwise}.
\end{array}
\right.
$$
As before we have $u \in \mathcal{D}'(G)$. If $\mathrm{Re}(p) \neq 0$, then
$$
|\widehat{u}(\xi_k)_{m_kn}| \geq |\mathrm{Im}(\widehat{u}{(\xi_k)}_{m_kn})| = 2|\mathrm{Re}(p)| > 0,
$$
that is, $u \notin C^\infty(G)$. If $\mathrm{Re}(p)=0$, then $\mathrm{Im}(p)\neq 0$, because $p \neq 0$. So,
$$
|\widehat{u}({\xi_k})_{m_kn}| = |\lambda_{m_k}(\xi_k)-2\mathrm{Im}(p)|> |\mathrm{Im}(p)| >0,
$$
for $k$ large enough, where the last inequality comes from \eqref{lambda-estimate}. Hence $u \notin C^\infty(G)$. Finally,
\begin{align*} 
	\widehat{Pu}(\xi_k)_{m_kn} &=(i\lambda_{m_k}(\xi_k)-q)\widehat{u}(\xi_k)_{m_kn} - p\overline{\widehat{{u}}(\bar{\xi_k})_{m_kn}} \\
	&=(i\lambda_{m_k}(\xi_k)-\bar{p})(-\lambda_{m_k}(\xi_k)-i2p)-p(\overline{-\lambda_{m_k}(\bar{\xi_k})-i2p})\\
	&=-i\lambda_{m_k}(\xi_k)^2+\mathrm{Re}(p)\lambda_{m_k}(\xi_k)\\
	&=i\Delta(\xi_k)_{m_k}\\
	&=i\widehat{f}(\xi_k)_{m_kn}.
\end{align*}
So, $\widehat{Pu}(\xi)_{mn}=i\widehat{f}(\xi)_{mn}$, for all $[\xi] \in \widehat{G}$, $1 \leq m,n \leq d_{\xi}$, hence $Pu=if$, which is a contradiction because $P$ is globally hypoelliptic. 
\end{proof}

In order to apply Theorem \ref{gh-nec} to give an example of an operator $P$ that is not globally hypoelliptic we need to verify if the compact Lie group $G$ satisfies the hypothesis $[\xi] \neq [\bar{\xi}],$
for all non-trivial $[\xi] \in \widehat{G}$. Since this assumption holds for $\mathbb{T}^d$, for any $d\in \mathbb{N}$,  we conclude that the hypothesis is verified for $\mathbb{T}^d \times G$, for any $d\in \mathbb{N}$ and any  compact Lie group $G$. We refer  \cite{BD95} and  \cite{KMR20}  for a detail discussion about representations of a product of compact Lie groups.
\begin{example}\label{notgh}
Consider the compact Lie group $G=\mathbb{T}^1 \times {\SU}$ and let $P$ the operator defined by
$$
Pu(t,x):= \partial_tu(t,x) + \partial_0u(t,x) - \tfrac{\sqrt{5}}{2}iu(t,x) - \overline{u(t,x)}, \quad (t,x)\in \mathbb{T}^1\times {\SU},
$$
where $\partial_0$ is the neutral operator on ${\SU}$ as in Example \ref{ex-su2}. We have $\widehat{G} \sim \mathbb{Z} \times \frac{1}{2}\mathbb{N}_0$ and the symbol of $X:=\partial_t+\partial_0$ is given by
$$
\sigma_X(\tau, \ell)_{mn} = i(\tau+m) \delta_{mn},
$$
for all $\tau \in \mathbb{Z}$, $\ell \in \frac{1}{2}\mathbb{N}_0$, $-\ell \leq m,n \leq \ell$ with $\ell-m, \ell-n \in \mathbb{Z}$. Clearly conditions 1 and 2 of Theorem \ref{gh-nec} do not hold. Let us see that condition 3 also fails. Indeed, we have
$$
\left|(\tau+m)^2-\left(\frac{5}{4}-1\right)\right|=\left|(\tau+m)^2-\frac{1}{4}\right|=0
$$
for $\tau=0$ and $m=\frac{1}{2}$, so condition 3 does not hold because $m=\frac{1}{2}$ appears for all $\ell \in \frac{1}{2}\mathbb{N}_0 \setminus \mathbb{N}_0$. Therefore the operator $P$ is not globally hypoelliptic.
\end{example}

\begin{corollary}
	Assume that $[\xi] \neq [\bar{\xi}],$
for all non-trivial $[\xi] \in \widehat{G}$, and $|p|=|q|$. If $P$ is globally hypoelliptic, then $G$ is isomorphic to a torus.
\end{corollary}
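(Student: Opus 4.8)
The plan is to push the hypotheses through Theorem \ref{gh-nec} (whose standing assumption $[\xi]\neq[\bar\xi]$ for non-trivial $[\xi]$ is in force here) and then reinterpret the surviving condition as a finiteness statement for the homogeneous space $G/H$, where $H$ is the closure of the one-parameter subgroup generated by $X$.

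First I would note that $|p|=|q|$ rules out conditions (1) and (2) of Theorem \ref{gh-nec}, so global hypoellipticity of $P$ forces condition (3); since $|q|^2-|p|^2=0$, it reads: there is $M>0$ with $\lambda_m(\xi)^2\geq\jp{\xi}^{-M}$ for all $1\leq m\leq d_\xi$ whenever $\jp{\xi}\geq M$. In particular $\lambda_m(\xi)\neq 0$ for all $1\leq m\leq d_\xi$ once $\jp{\xi}\geq M$, and since $\{[\xi]\in\widehat G:\jp{\xi}<M\}$ is finite, only finitely many classes $[\xi]\in\widehat G$ have $0$ among $\lambda_1(\xi),\dots,\lambda_{d_\xi}(\xi)$; equivalently, only finitely many $[\xi]$ make $\sigma_X(\xi)$ singular. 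Only this qualitative consequence will be used, not the polynomial lower bound in (3).

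Next I would identify those singular classes. Differentiating $\xi(x\exp tX)=\xi(x)\,\xi(\exp tX)$ at $t=0$ gives $(X\xi)(x)=\xi(x)\,d\xi(X)$, hence $\sigma_X(\xi)=\xi(x)^*(X\xi)(x)=d\xi(X)$; this skew-Hermitian matrix has $0$ as an eigenvalue precisely when some nonzero $v$ satisfies $\xi(\exp tX)v=v$ for all $t\in\mathbb R$. Put $H:=\overline{\{\exp(tX):t\in\mathbb R\}}$, a closed connected abelian subgroup of $G$; by continuity such a $v$ is then fixed by all of $H$. So the step above says that only finitely many $[\xi]\in\widehat G$ admit a nonzero $H$-fixed vector. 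By the Peter--Weyl theorem, a right-$H$-invariant function $f\in L^2(G)$ has $\widehat f(\xi)=\xi(h)\widehat f(\xi)$ for $h\in H$, so its Fourier coefficients are supported (columnwise) on $H$-fixed vectors; consequently $L^2(G/H)$, the space of such functions, is a finite sum of finite-dimensional subspaces, hence finite-dimensional.

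Finally I would conclude: $L^2(G/H)$ finite-dimensional forces the compact homogeneous space $G/H$ to be a finite set, and, being a continuous image of the connected group $G$, it must be a single point; thus $H=G$, so $G=\overline{\{\exp(tX):t\in\mathbb R\}}$ is abelian, and a connected compact abelian Lie group is a torus. I expect the crux to be the middle paragraph --- realizing that condition (3) under $|p|=|q|$ is exactly a cofinite nonsingularity statement for $\sigma_X$ and repackaging ``few singular classes'' as ``$L^2(G/H)$ finite-dimensional'', which is the mechanism collapsing $G/H$ to a point; the reduction via Theorem \ref{gh-nec}, the identity $\sigma_X(\xi)=d\xi(X)$, and the final structural remark are routine. (Without assuming $G$ connected, the same argument gives that the identity component of $G$ is a torus.)
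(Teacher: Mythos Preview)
Your argument is correct, and it takes a genuinely different route from the paper's proof. The paper observes, as you do, that under $|p|=|q|$ only condition~(3) of Theorem~\ref{gh-nec} can survive, yielding $|\lambda_m(\xi)|\geq\jp{\xi}^{-M}$ for $\jp{\xi}\geq M$; it then cites Theorem~3.3 of \cite{KMR20b} to conclude that this bound makes the vector field $X$ itself globally hypoelliptic, and finishes by invoking the Greenfield--Wallach theorem (proved for compact Lie groups in \cite{GW73a}) that a globally hypoelliptic vector field exists only on tori.

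Your proof, by contrast, is self-contained: you discard the quantitative bound and keep only the qualitative consequence that $\sigma_X(\xi)$ is nonsingular for all but finitely many $[\xi]$, then translate this via $\sigma_X(\xi)=d\xi(X)$ into the statement that only finitely many irreducible representations admit nonzero $H$-fixed vectors, where $H=\overline{\{\exp(tX)\}}$. The Peter--Weyl step showing $L^2(G/H)$ is then finite-dimensional, together with the observation that a connected compact homogeneous space with finite-dimensional $L^2$ is a point, replaces both external citations. In effect you are giving a direct, elementary proof of the relevant fragment of the Greenfield--Wallach result rather than quoting it. What the paper's route buys is brevity and a link to a named theorem; what yours buys is independence from outside sources and the observation that the polynomial lower bound in (3) is not actually needed for this corollary. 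Your parenthetical remark about connectedness is also a fair caveat that the paper leaves implicit.
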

\begin{proof}
	Since $P$ is globally hypoelliptic, we are in the situation 3. of the Theorem \ref{gh-nec}. Hence, using the fact the $|p|=|q|$, there exists $M>0$ such that 
	$$
	\jp{\xi} \geq M \implies 	|\lambda_m(\xi)| \geq \jp{\xi}^{-M},
	$$
	for all $1 \leq m \leq d_\xi$. This property implies that the vector field $X$ is globally hypoelliptic (Theorem 3.3 of \cite{KMR20b}). Therefore $G$ must be isomorphic to a torus, because the Greenfield-Wallach conjecture is valid for compact Lie groups, proved by Greenfield and Wallach in \cite{GW73a}. 
	
\end{proof}

We point out that when $G=\mathbb{T}^2$ the results obtained in this section extend the results in \cite{BDM14} about the global hypoellipticity of the operator
$$
Pu=\partial_t u+ c \partial_x u-qu - p\bar{u},
$$
when $c \in \mathbb{R}$ because $\partial_t+c\partial_x \in \widehat{\mathbb{T}^2}$ and $\mathbb{T}^2$ satisfies the hypothesis in Theorem \ref{gh-nec}. 

Although the hypothesis $\mathrm{Im} (c) \neq 0$ is a forth condition in Theorems \ref{thm-suf} and \ref{gh-nec} on $\mathbb{T}^2$ (see Theorem 1, \cite{BDM14}), this does not hold in general for compact Lie groups. For instance, consider the operator
$$
Pu=\partial_tu+c\partial_0u - qu - p\bar{u}
$$
defined in $G=\mathbb{T}^2\times \mathrm{SU}(2)$, where $\partial_0$ is the neutral operator on $\mathrm{SU}(2)$, with $c,p,q \in \mathbb{C}$ satisfying $\mathrm{Im}(c) \neq 0$ and $|p|=|q|$. The elements on $\widehat{G}$ can be identified with $\mathbb{Z} \times \tfrac{1}{2}\mathbb{N}_0$ and we have
$$
\sigma_{\partial_0} (\ell)_{mn} = im\delta_{mn},
$$
for all $\ell \in \tfrac{1}{2}\mathbb{N}_0$, $-\ell \leq m,n \leq \ell$, $\ell-m,\ell-n \in \mathbb{Z}$.

Since in $\mathbb{T}^2$ we have the property $[\xi] \neq [\bar{\xi}]$, for all non-trivial representations $[\xi] \in \widehat{\mathbb{T}^2}$, this property also holds for the product $\mathbb{T}^2 \times \textrm{SU}(2)$. In this case we can reply the same argument before the statement of Theorem \ref{thm-suf} to obtain
$$
\Delta(\tau,\ell)_m = -|\tau+cm|^2+|q|^2-|p|^2-i2\mathrm{Re}(q(\tau+\bar{c}m)),
$$
for all $\tau \in \mathbb{Z}$, $\ell \in \tfrac{1}{2}\mathbb{N}_0$, $-\ell \leq m \leq \ell$, $\ell-m\in \mathbb{Z}$. Moreover, we can easily adapt Proposition \ref{lemma-nec} to this operator. Hence, 
$$
\Delta(0,\ell)_0=0,
$$
for all $\ell \in \mathbb{N}$, that is, $\Delta(\tau,\ell)_m=0$ for infinitely many representations, which implies that $P$ is not globally hypoelliptic by Proposition \ref{lemma-nec}. 

We also can construct a counter-example that satisfies the condition 2. on Theorem \ref{thm-suf}. Consider the operator
$$
P_ku=\partial_tu+ic\partial_0u-qu-p\bar{u},
$$
with $c,q \in \mathbb{R}\setminus\{0\}$, and $p \in \mathbb{C}$ satisfying $|q|^2-|p|^2=|c|^2k^2$, with $k\in \mathbb{N}$. Notice that $|p|<|q|$, $\mathrm{Re}(q)\neq 0$, and
$$
\Delta(\tau,\ell)_m=-|\tau+ic m|^2+|q|^2-|p|^2-i2q\tau,
$$
for all $\tau \in \mathbb{Z}$, $\ell \in \tfrac{1}{2}\mathbb{N}_0$, $-\ell \leq m \leq \ell$, $\ell-m\in \mathbb{Z}$. Hence,
$$
\Delta(0,k)_k=-|c|^2k^2+|q|^2-|p|^2=0,
$$
for all $k\in \mathbb{N}$, which implies that $P_k$ is not globally hypoelliptic by Proposition \ref{lemma-nec}.
\section{Global Solvability}\label{sectionGS}
A natural question that appears on the study of properties of an operator is the existence of solutions, that is, given $f \in \mathcal{D}'(G)$, do there exist $u \in \mathcal{D}'(G)$ such that $Pu=f$?

For the operator $P$ defined on \eqref{operatorP} there are some compatibility conditions about which $f$ we can expect to solve the equation $Pu=f$. Precisely, if $Pu=f$, then from \eqref{relation-sol} we have
\begin{equation}\label{relation-sol2}
	\Delta(\xi)_m \widehat{u}(\xi)_{mn} = (i\lambda_m(\xi)-\bar{q})\widehat{f}(\xi)_{mn}+p\overline{\widehat{{f}}(\bar{\xi})_{mn}},
\end{equation}
for all $[\xi] \in \widehat{G}$, $1 \leq m,n \leq d_\xi$. Therefore, if $\Delta(\xi)_m=0$, the distribution $f$ must satisfy
\begin{equation}\label{compatibility}
(i\lambda_m(\xi)-\bar{q})\widehat{f}(\xi)_{mn}+p\overline{\widehat{{f}}(\bar{\xi})_{mn}}=0,
\end{equation}
for every $1 \leq n \leq d_\xi$. Consider $\mathbb{E}$ the subspace of $\mathcal{D}'(G)$ of distributions that have this property, that is,
$$
\mathbb{E}:= \{f \in \mathcal{D}'(G) ; \ \Delta(\xi)_m=0 \implies (i\lambda_m(\xi)-\bar{q})\widehat{f}(\xi)_{mn}+p\overline{\widehat{{f}}(\bar{\xi})_{mn}}=0, \ 1\leq n \leq d_\xi \}.
$$

We call the elements of $\mathbb{E}$ admissible distribution for the operator $P$ and we say that $P$ is globally solvable if for every $f \in \mathbb{E}$ there exists $u\in \mathcal{D}'(G)$ such that $Pu=f$, that is, $P(\mathcal{D}'(G)) = \mathbb{E}$.

This notion of solvability is different from the one studied on \cite{BDM14}, where the authors considered solvability on spaces of finite codimension. For us, global solvability means to solve the equation $Pu=f$, for every $f$ which this equation makes sense in terms of the compatibility condition \eqref{compatibility}.

Given $f \in \mathbb{E}$, our  goal is to determine $u\in \mathcal{D}'(G)$ to obtain $Pu=f$. By \eqref{relation-sol2}, when $\Delta(\xi)_m \neq 0$ we have
\begin{equation}\label{solutiondeltaneq0}
	 \widehat{u}(\xi)_{mn} =\frac{1}{	\Delta(\xi)_m} \left( (i\lambda_m(\xi)-\bar{q})\widehat{f}(\xi)_{mn}+p\widehat{\overline{f}}(\xi)_{mn}\right) 
\end{equation}

When $\Delta(\xi)_m = 0$ we are led to look for the conjugate representation and for this reason we will assume the condition $[\xi] \neq [\bar{\xi}],$
for all non-trivial $[\xi] \in \widehat{G}$. Notice that if $\Delta(\xi)_m=0$, then $\Delta(\bar{\xi})_m=0$. Thus, for each pair $([\xi], [\bar{\xi}])$ define 
\begin{equation}\label{solutiondeltaeq0}
\left\{ 
\begin{aligned}
	\widehat{u}(\xi)_{mn} &=0 \\
	\widehat{u}(\bar{\xi})_{mn} & = -\frac{\overline{\widehat{f}(\xi)_{mn}}}{\bar{p}},
\end{aligned}
\right.
\end{equation}
for every $1 \leq n \leq d_\xi$. Let us check that $\widehat{Pu}(\xi)_{mn} = \widehat{f}(\xi)_{mn}$. Indeed, by \eqref{system} we have
\begin{align*}
		\widehat{Pu}(\xi)_{mn} &=(i\lambda_m(\xi)-q)\widehat{u}(\xi)_{mn} - p\overline{\widehat{{u}}(\bar{\xi})_{mn}}\\
		&=-p \left(\overline{ -\frac{\overline{\widehat{f}(\xi)_{mn}}}{\bar{p}}}\right)\\
		&=\widehat{f}(\xi)_{mn}
\end{align*}
and
\begin{align*}
		\widehat{Pu}(\bar{\xi})_{mn} &= (-i\lambda_m({\xi})-q)\widehat{u}(\bar{\xi})_{mn} - p\overline{\widehat{u}({\xi})_{mn}}\\
		&=(-i\lambda_m({\xi})-q) \left(\overline{ -\frac{\overline{\widehat{f}(\xi)_{mn}}}{\bar{p}}}\right) \\
		&= \overline{\left({\frac{-i\lambda_m({\xi})+\bar{q}}{p}} \right)\widehat{f}(\xi)_{mn}}
\end{align*}
Since $f \in \mathbb{E}$, we have 
$$
\left({\frac{-i\lambda_m({\xi})+\bar{q}}{p}} \right)\widehat{f}(\xi)_{mn} = \overline{\widehat{f}(\bar{\xi})_{mn}},
$$
which implies that 
$$
\widehat{Pu}(\bar{\xi})_{mn}  = \overline{\overline{\widehat{f}(\bar{\xi})_{mn}}} = \widehat{f}(\bar{\xi})_{mn},
$$
for all $1 \leq n \leq d_\xi$. We point out that the solution given by \eqref{solutiondeltaeq0} is not unique.

Notice that when $\Delta(\xi)_m=0$, the decay of $\widehat{u}(\xi)_{mn}$ and $\widehat{f}(\xi)_{mn}$ are the same, then in order to guarantee that $u \in \mathcal{D}'(G)$ we need to estimate $\Delta(\xi)_m^{-1}$, when $\Delta(\xi)_m \neq 0$.

\begin{theorem}\label{GS}
Assume that  $[\xi] \neq [\bar{\xi}],$
for all non-trivial $[\xi] \in \widehat{G}$. Then $P$ is globally solvable if and only if one of the following conditions holds:
\begin{enumerate}
	\item $|p|>|q|$;
	\item $|p|<|q|$ and $\mathrm{Re}(q) \neq 0$;
	\item $\exists M>0$ such that
	$$
	\Big|\lambda_m(\xi)^2-(|q|^2-|p|^2)\Big| \geq \jp{\xi}^{-M},
	$$
	for all $1 \leq m \leq d_\xi$, whenever $\lambda_m(\xi)^2-(|q|^2-|p|^2)\neq 0$
\end{enumerate}
\end{theorem}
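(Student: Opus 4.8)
The plan is to use two facts already in place: relation \eqref{relation-sol2} is a \emph{necessary} condition that any solution of $Pu=f$ must satisfy, and \eqref{solutiondeltaneq0}--\eqref{solutiondeltaeq0} furnish an explicit candidate solution for every $f\in\mathbb{E}$. Sufficiency will reduce to checking that this candidate lies in $\mathcal{D}'(G)$, and necessity to manufacturing an admissible $f$ whose forced solution fails to be a distribution.

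For sufficiency, assume one of 1, 2, 3 holds and let $f\in\mathbb{E}$. I would define $\widehat{u}(\xi)_{mn}$ by \eqref{solutiondeltaneq0} at the frequencies with $\Delta(\xi)_m\neq 0$ and by \eqref{solutiondeltaeq0} at the paired frequencies with $\Delta(\xi)_m=0$ (the standing hypothesis $[\xi]\neq[\bar\xi]$ is what makes the latter unambiguous). The computation preceding the statement already gives $\widehat{Pu}=\widehat{f}$, so only the bound \eqref{distr} for these coefficients remains. Since $f\in\mathcal{D}'(G)$ its coefficients grow at most polynomially and $\jp{\xi}=\jp{\bar\xi}$, so the coefficients produced by \eqref{solutiondeltaeq0} and the numerators in \eqref{solutiondeltaneq0} are polynomially bounded; the only issue is to bound $|\Delta(\xi)_m|^{-1}$ by a power of $\jp{\xi}$ on the set where $\Delta(\xi)_m\neq 0$. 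This is precisely the estimate carried out in the proof of Theorem \ref{thm-suf}: under 1 or 2 one gets $|\Delta(\xi)_m|\geq c>0$ uniformly, and under 3 one uses $|\Delta(\xi)_m|\geq|\mathrm{Re}(\Delta(\xi)_m)|=|\lambda_m(\xi)^2-(|q|^2-|p|^2)|\geq\jp{\xi}^{-M}$ whenever $\lambda_m(\xi)^2-(|q|^2-|p|^2)\neq 0$, while in the remaining subcase $\lambda_m(\xi)^2=|q|^2-|p|^2\neq 0$ pins $|\lambda_m(\xi)|$ to a fixed positive value and forces $|\Delta(\xi)_m|=2|\lambda_m(\xi)||\mathrm{Re}(q)|$ to be bounded below. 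Hence $u\in\mathcal{D}'(G)$ and $Pu=f$.

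For necessity I would argue by contraposition, as in Theorem \ref{gh-nec}. If none of 1, 2, 3 holds, then either (a) $|p|<|q|$, $\mathrm{Re}(q)=0$, and 3 fails, or (b) $|p|=|q|$ and 3 fails. Because for each fixed $[\xi]$ the finitely many nonzero values of $\lambda_m(\xi)^2-(|q|^2-|p|^2)$ stay away from $0$, the failure of 3 produces a sequence of nontrivial $[\xi_k]\in\widehat{G}$ with $\jp{\xi_k}\to\infty$ and indices $m_k$ with $0<|\lambda_{m_k}(\xi_k)^2-(|q|^2-|p|^2)|\leq\jp{\xi_k}^{-k}$; passing to a subsequence and using $[\xi]\neq[\bar\xi]$, I may also assume $[\xi_k],[\bar{\xi_k}]$ are pairwise distinct. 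Writing $\Delta(\xi_k)_{m_k}=-\bigl(\lambda_{m_k}(\xi_k)^2-(|q|^2-|p|^2)\bigr)-i2\lambda_{m_k}(\xi_k)\mathrm{Re}(q)$, the real part is at most $\jp{\xi_k}^{-k}$ in absolute value, and the imaginary part is $0$ in case (a) while in case (b) it is at most $2|\mathrm{Re}(q)|\jp{\xi_k}^{-k/2}$ (there $|\lambda_{m_k}(\xi_k)|\leq\jp{\xi_k}^{-k/2}$); thus $0\neq|\Delta(\xi_k)_{m_k}|$ is smaller than every fixed power of $\jp{\xi_k}$ for $k$ large. Now put $\widehat{f}(\bar{\xi_k})_{m_kn}=1$ for all $1\leq n\leq d_{\xi_k}$ and $\widehat{f}=0$ at every other matrix entry; then $f\in\mathcal{D}'(G)$, and since $\Delta(\bar{\xi_k})_{m_k}=\overline{\Delta(\xi_k)_{m_k}}\neq 0$ the frequencies where $\widehat{f}$ is nonzero carry no compatibility constraint, so $f\in\mathbb{E}$. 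If some $u\in\mathcal{D}'(G)$ solved $Pu=f$, then \eqref{relation-sol2} at $(\xi_k,m_k)$ would give $\Delta(\xi_k)_{m_k}\widehat{u}(\xi_k)_{m_kn}=p\,\overline{\widehat{f}(\bar{\xi_k})_{m_kn}}=p$, so $|\widehat{u}(\xi_k)_{m_kn}|=|p|/|\Delta(\xi_k)_{m_k}|$ would grow faster than every power of $\jp{\xi_k}$, contradicting \eqref{distr}. Hence $P$ is not globally solvable.

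The main obstacle is the necessity direction. In Theorem \ref{gh-nec} one could exploit a nonsmooth but still distributional solution of $Pu=f$, whereas here one must build an \emph{admissible} $f$ whose uniquely determined Fourier coefficients leave $\mathcal{D}'(G)$ altogether. The right idea is to keep $\widehat{f}$ of bounded size while $\Delta(\xi_k)_{m_k}$ is forced to be super-polynomially small, so that dividing by $\Delta$ in \eqref{relation-sol2} yields a non-distributional coefficient; the accompanying care is to place $\widehat{f}$ on the conjugate frequencies $[\bar{\xi_k}]$ (where $\Delta\neq 0$) so that $f$ stays in $\mathbb{E}$, and to extract the sequence so that all representations involved are nontrivial, pairwise inequivalent, and inequivalent to their conjugates.
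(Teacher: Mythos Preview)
Your proof is correct and follows essentially the same route as the paper's own argument: for sufficiency you invoke the explicit candidate \eqref{solutiondeltaneq0}--\eqref{solutiondeltaeq0} and reduce to the lower bounds on $|\Delta(\xi)_m|$ established in Theorem~\ref{thm-suf}, and for necessity you place bounded Fourier data on the conjugate frequencies $[\bar\xi_k]$ (the paper uses $\widehat f(\bar\xi_k)_{m_kn}=\bar p^{-1}$ rather than $1$, an immaterial difference) so that \eqref{relation-sol2} forces $|\widehat u(\xi_k)_{m_kn}|\sim|\Delta(\xi_k)_{m_k}|^{-1}$ to blow up super-polynomially. One point where you are in fact more careful than the paper: under condition~3 you explicitly treat the residual subcase $\lambda_m(\xi)^2=|q|^2-|p|^2$ with $\Delta(\xi)_m\neq0$, which the paper glosses over by citing Theorem~\ref{thm-suf} even though the condition~3 there does not carry the ``whenever nonzero'' clause.
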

\begin{proof}
	By the previous discussion, given $f \in \mathbb{E}$ we need to proof that the sequence of Fourier coefficients given by \eqref{solutiondeltaneq0} and \eqref{solutiondeltaeq0} defines a distribution $u \in \mathcal{D}'(G)$. 
The proof of the sufficiency has the same estimates from the proof of Theorem  \ref{suf-GH} and will be omitted. 

To prove the necessity, assume that none of the conditions 1-3 are satisfied. Proceeding analogously to the proof of Theorem \ref{gh-nec}, we can obtain a sequence $\{[\xi_k]\}_{k \in \mathbb{N}}$ such that
\begin{equation}\label{counter}
0 < |\Delta(\xi_k)_{m_k}| \leq C \jp{\xi_k}^{-k},
\end{equation}
for some constant $C>0$ and $1 \leq m_k \leq d_{\xi_k}$, for all $k\in \mathbb{N}$. We may assume that $\xi_k \neq \bar{\xi_{\ell}}$, for every $k, \ell \in \mathbb{N}$.
Let us construct an admissible distribution $f \in \mathbb{E}$ such that there is no $u \in \mathcal{D}'(G)$ satisfying $Pu=f$. Define
$$
\widehat{f}(\xi)_{mn} = 
\left\{
\begin{aligned}
	{\overline{p}}^{-1}, & \quad  \text{if } \xi = \bar{\xi_k} \text{ and } m=m_k, \\
	0, & \quad  \text{otherwise}.
\end{aligned}
\right.
$$
Notice that this sequence of Fourier coefficients define an admissible distribution $f \in \mathbb{E}$, because if $\Delta(\xi)_m=0$, then $\xi \neq \xi_k$ and $\xi \neq \bar{\xi_k}$, for all $k\in \mathbb{N}$, by \eqref{counter}. Suppose that $Pu=f$ for some $u \in \mathcal{D}'(G)$. Then by \eqref{relation-sol2} we have
\begin{align}
\Delta(\xi_k)_{m_k} \widehat{u}(\xi_k)_{m_kn} &= (i\lambda_{m_k}(\xi_k)-\bar{q})\widehat{f}(\xi_k)_{m_kn}+p\overline{\widehat{{f}}(\bar{\xi_k})_{m_kn}}\\
 &= (i\lambda_{m_k}(\xi_k)-\bar{q})0+p\overline{	{\overline{p}}^{-1}}\\
 &=1,
\end{align}
for all $1 \leq n \leq d_{\xi_k}$. Hence,
$$
|\widehat{u}(\xi_k)_{m_kn}| = | \Delta(\xi_k)_{m_k}| ^{-1} \geq C^{-1} \jp{\xi_k}^k,
$$
which contradicts the fact that $u \in \mathcal{D}'(G)$. We have shown that there is no $u \in \mathcal{D}'(G)$ satisfying $Pu=f$, with $f\in \mathbb{E}$, therefore $P$ is not globally solvable, which concludes the proof.
\end{proof}
The next corollary is a direct consequence of Theorem \ref{gh-nec} and Theorem \ref{GS}.
\begin{corollary}
Assume that  $[\xi] \neq [\bar{\xi}],$
for all non-trivial $[\xi] \in \widehat{G}$. If $P$ is globally hypoelliptic  then $P$ is globally solvable.
\end{corollary}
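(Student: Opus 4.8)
The plan is to obtain the conclusion by comparing the lists of hypotheses in Theorem \ref{gh-nec} and Theorem \ref{GS}. If $P$ is globally hypoelliptic, then by Theorem \ref{gh-nec} at least one of the conditions (1), (2), (3) of that theorem holds, and I want to deduce that one of the conditions (1), (2), (3) of Theorem \ref{GS} holds; the global solvability of $P$ then follows directly from Theorem \ref{GS}.

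Conditions (1) ($|p|>|q|$) and (2) ($|p|<|q|$ and $\mathrm{Re}(q)\neq 0$) are literally the same in both theorems, so in those two cases there is nothing to prove. The only case requiring an argument is when condition (3) of Theorem \ref{gh-nec} holds, i.e. there is $M>0$ with $|\lambda_m(\xi)^2-(|q|^2-|p|^2)|\geq \jp{\xi}^{-M}$ for all $m$ whenever $\jp{\xi}\geq M$. I would then check that this implies condition (3) of Theorem \ref{GS}: for $\jp{\xi}\geq M$ the required inequality is already in hand (and there $\lambda_m(\xi)^2-(|q|^2-|p|^2)\neq 0$ automatically), while the set $\{[\xi]\in\widehat G:\jp{\xi}<M\}$ is finite, so over this set the nonzero values of $|\lambda_m(\xi)^2-(|q|^2-|p|^2)|$ attain a strictly positive minimum; enlarging $M$ if necessary one absorbs these finitely many representations and obtains condition (3) of Theorem \ref{GS}. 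Applying Theorem \ref{GS} then gives that $P$ is globally solvable.

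I expect the only genuinely non‑automatic point to be this reconciliation of the "$\jp{\xi}\geq M$" formulation in Theorem \ref{gh-nec}(3) with the "whenever $\lambda_m(\xi)^2-(|q|^2-|p|^2)\neq 0$" formulation in Theorem \ref{GS}(3) over the finite set of low‑frequency representations. A cleaner and fully self‑contained alternative, which bypasses the comparison, is to feed condition (3) of Theorem \ref{gh-nec} straight into the solvability construction \eqref{solutiondeltaneq0}--\eqref{solutiondeltaeq0}: given $f\in\mathbb E$, for $\jp{\xi}\geq M$ one has $\Delta(\xi)_m\neq 0$ and $|\Delta(\xi)_m|\geq|\mathrm{Re}(\Delta(\xi)_m)|=|\lambda_m(\xi)^2-(|q|^2-|p|^2)|\geq\jp{\xi}^{-M}$, whereas for the finitely many $[\xi]$ with $\jp{\xi}<M$ and $\Delta(\xi)_m\neq 0$ one bounds $|\Delta(\xi)_m|^{-1}$ by a constant (and uses \eqref{solutiondeltaeq0} where $\Delta(\xi)_m=0$); hence $|\Delta(\xi)_m|^{-1}$ is polynomially bounded, and combined with a distribution bound $|\widehat f(\xi)_{mn}|\leq C\jp{\xi}^{N}$ this shows the coefficients defined by \eqref{solutiondeltaneq0}--\eqref{solutiondeltaeq0} are those of some $u\in\mathcal D'(G)$ with $Pu=f$, so $P$ is globally solvable.
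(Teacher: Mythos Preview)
Your overall strategy matches the paper's one-line proof, which simply records the corollary as a direct consequence of Theorem~\ref{gh-nec} and Theorem~\ref{GS}; and your second, ``self-contained'' argument via the construction \eqref{solutiondeltaneq0}--\eqref{solutiondeltaeq0} is fully correct and is the cleanest way to make the implication rigorous.

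There is, however, a small slip in your first approach worth flagging: enlarging $M$ does \emph{not} absorb the trivial representation, since there $\jp{\xi}=1$ and hence $\jp{\xi}^{-M}=1$ for every $M>0$. Concretely, if $|p|<|q|$, $\mathrm{Re}(q)=0$, and $0<|q|^2-|p|^2<1$, then condition (3) of Theorem~\ref{gh-nec} may hold while condition (3) of Theorem~\ref{GS}, read literally, fails at the trivial representation (where $\lambda=0$ and the quantity equals $|q|^2-|p|^2<1$). This is really a harmless imprecision in the formulation of Theorem~\ref{GS}(3) rather than an obstruction to the corollary---the necessity proof of Theorem~\ref{GS} actually uses a sequence with $\jp{\xi_k}\geq k\to\infty$, so a single low-frequency violation is irrelevant there---and your alternative argument sidesteps the issue entirely.
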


Thus, we have a  class of examples of operators $P$ that are globally solvable by considering globally hypoelliptic operators defined on groups that satisfy the hypothesis about the  representations. Let us see now that the converse does not hold.
\begin{example}
We have seen in Example \ref{notgh} that the operator
$$
Pu(t,x):= \partial_tu(t,x) + \partial_0u(t,x) - \tfrac{\sqrt{5}}{2}iu(t,x) - \overline{u(t,x)}, \quad (t,x)\in \mathbb{T}^1\times {\SU},
$$
is not globally hypoelliptic. However, this operator is globally solvable by condition 3 of Theorem \ref{GS}. Indeed, if
$$
\left|(\tau+m)^2-\left(\frac{5}{4}-1\right)\right|=\left|(\tau+m)^2-\frac{1}{4}\right|\neq0,
$$
for some $\tau \in \mathbb{Z}$ and $m \in \frac{1}{2}\mathbb{N}_0$, then 
$$
\left|(\tau+m)^2-\frac{1}{4}\right| > \frac{1}{4},
$$
which implies condition 3 of Theorem \ref{GS}.
\end{example}
Finally, let us see an example of an operator that is not globally solvable.
\begin{example}
Let $G=\mathbb{T} \times \SU$ and $\alpha \in \mathbb{R}$ be an irrational Liouville number. Consider the operator
$$
Pu(t,x):= \partial_tu(t,x) + \alpha \partial_0u(t,x) - iu(t,x) - \overline{u(t,x)}, \quad (t,x)\in \mathbb{T}^1\times {\SU}.
$$
Notice that $P$ does not satisfy neither condition 1 nor 2 of Theorem \ref{GS}. Assume that $P$ satisfies condition 3, then in particular there exists $M>0$ such that
$$
|(\tau + \alpha \ell)^2| \geq \jp{(\tau,\ell)}^{-M},
$$
for $\tau \in \mathbb{Z}$ and $\ell \in \mathbb{N}_0$, $(\tau,\ell) \neq (0,0)$. However, this implies that $\alpha$ is not a Liouville number, so the operator $P$ is not globally solvable.
\end{example}

In \cite{BDM14} the authors proved that $P$ is globally hypoelliptic if and only if $P$ is solvable. However, since we are using a different notion of solvability, we do not have this equivalence here, as seen in the previous example.

By the expressions \eqref{solutiondeltaneq0} and \eqref{solutiondeltaeq0} that we have obtained for the solution of the equation $Pu=f$, we have the following corollary:
\begin{corollary}
Assume that  $[\xi] \neq [\bar{\xi}],$
for all non-trivial $[\xi] \in \widehat{G}$. If $P$ is globally solvable, then for every admissible smooth function $f\in \mathbb{E}$, there exists $u \in C^\infty(G)$ such that $Pu=f$.
\end{corollary}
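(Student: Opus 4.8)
The plan is to take the explicit solution $u$ constructed in the proof of Theorem \ref{GS} by \eqref{solutiondeltaneq0} and \eqref{solutiondeltaeq0} and to show that, when $f$ is smooth, this $u$ is smooth as well. First I would invoke Theorem \ref{GS}: since $P$ is globally solvable, one of the conditions (1), (2), (3) of that theorem holds. Fix an admissible $f \in \mathbb{E} \cap C^\infty(G)$, so that for each $N>0$ there is $C_N>0$ with $|\widehat{f}(\xi)_{mn}| \leq C_N \jp{\xi}^{-N}$ for all $[\xi] \in \widehat{G}$ and $1 \leq m,n \leq d_\xi$, and likewise for $\overline{f}$. Define $\widehat{u}(\xi)_{mn}$ by \eqref{solutiondeltaneq0} on the indices with $\Delta(\xi)_m \neq 0$ and by \eqref{solutiondeltaeq0} on each pair $([\xi],[\bar{\xi}])$ with $\Delta(\xi)_m = 0$; since $\Delta(\bar{\xi})_m = \overline{\Delta(\xi)_m}$ and $[\xi] \neq [\bar{\xi}]$ for every non-trivial $[\xi]$, these two prescriptions cover all indices without conflict. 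That $Pu=f$ is precisely the computation already carried out in the proof of Theorem \ref{GS}, so the only thing left to check is the decay estimate \eqref{cinfty} for $u$.

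On the indices with $\Delta(\xi)_m = 0$, formula \eqref{solutiondeltaeq0} gives $\widehat{u}(\xi)_{mn} = 0$ and $|\widehat{u}(\bar{\xi})_{mn}| = |p|^{-1}|\widehat{f}(\xi)_{mn}| \leq |p|^{-1}C_N\jp{\xi}^{-N} = |p|^{-1}C_N\jp{\bar{\xi}}^{-N}$, using $\jp{\xi}=\jp{\bar{\xi}}$; hence these coefficients decay faster than any power of $\jp{\,\cdot\,}$. On the indices with $\Delta(\xi)_m \neq 0$ I would insert into \eqref{solutiondeltaneq0} a lower bound for $|\Delta(\xi)_m|$ coming from whichever of (1)--(3) holds: under (1) one has $|\Delta(\xi)_m| \geq |p|^2-|q|^2 > 0$; under (2) one has $|\Delta(\xi)_m| \geq C > 0$ for a constant $C$, exactly as in the proof of Theorem \ref{thm-suf}; under (3) one has $|\Delta(\xi)_m| \geq |\mathrm{Re}(\Delta(\xi)_m)| \geq \jp{\xi}^{-M}$ whenever $\mathrm{Re}(\Delta(\xi)_m) \neq 0$ and $\jp{\xi} \geq M$, while the remaining indices with $\Delta(\xi)_m \neq 0$ (the finitely many with $\jp{\xi}<M$, together with those, if any, where $\mathrm{Re}(\Delta(\xi)_m)=0$, in which case $\lambda_m(\xi)^2$ is pinned to the value $|q|^2-|p|^2$ and $|\Delta(\xi)_m|$ equals a fixed positive constant) are bounded below by a positive constant. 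In all cases there are $C', M' \geq 0$ with $|\Delta(\xi)_m|^{-1} \leq C'\jp{\xi}^{M'}$ on the set $\{\Delta(\xi)_m \neq 0\}$, and since $|i\lambda_m(\xi)-\bar{q}| \leq \jp{\xi}+|q|$, \eqref{solutiondeltaneq0} yields
\[
|\widehat{u}(\xi)_{mn}| \leq C'\jp{\xi}^{M'}\left( (\jp{\xi}+|q|)\,|\widehat{f}(\xi)_{mn}| + |p|\,|\widehat{\overline{f}}(\xi)_{mn}| \right),
\]
which, by the smoothness of $f$ and $\overline{f}$, is bounded by $C_N''\jp{\xi}^{-N}$ for every $N>0$. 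Combining the two cases, $u$ satisfies \eqref{cinfty}, so $u \in C^\infty(G)$ and $Pu=f$.

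I do not expect any genuine obstacle here: the estimates are those already used for the sufficiency directions of Theorems \ref{thm-suf} and \ref{GS}, now applied to a smooth datum $f$, so the main difficulty is only bookkeeping. The two points that need a word of care are that the two prescriptions for $\widehat{u}$ do not overlap -- this is where the hypothesis $[\xi] \neq [\bar{\xi}]$ is used -- and that the finitely many low-frequency representations appearing in case (3) are absorbed into the constants. Note finally that the smooth solution produced is the one coming from the explicit construction; as remarked after \eqref{solutiondeltaeq0}, it need not be unique.
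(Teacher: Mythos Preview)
Your proposal is correct and follows exactly the approach the paper intends: the paper's own ``proof'' of this corollary is the single sentence preceding it, pointing to the explicit formulas \eqref{solutiondeltaneq0} and \eqref{solutiondeltaeq0} and noting that the decay of $\widehat{u}$ matches that of $\widehat{f}$. You have simply written out the estimates that the paper leaves implicit, and your case analysis for the lower bound on $|\Delta(\xi)_m|$ under each of conditions (1)--(3) of Theorem~\ref{GS} is accurate.
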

\section{The operator $P$ on $\textrm{SU}(2)$}\label{operatoronsu2}
We have supposed on Proposition \ref{lemma-nec}, Theorem \ref{gh-nec} and Theorem \ref{GS} that every non-trivial irreducible unitary representation of the groups is not self-dual. This hypothesis came from the technique that we have used to prove these results, which was based on the torus case. In this section, we present these results for $\textrm{SU}(2)$, where we have 
$$
[\xi]=[\bar{\xi}],
$$
for all $[\xi] \in \widehat{\textrm{SU}(2)}$. 

Recall from Example \ref{ex-su2} that $\widehat{\textrm{SU}(2)}$ consists of equivalence classes $[\textsf{t}^\ell]$ of continuous irreducible unitary representations
$\textsf{t}^\ell:\textrm{SU}(2)\to \mathbb{C}^{(2\ell+1)\times (2\ell+1)}$, $\ell\in\frac12\mathbb{N}_{0}$ and we use the standard convention of enumerating the matrix elements
$\textsf{t}^\ell_{mn}$ of $\textsf{t}^\ell$ using indices $m,n$ ranging between
$-\ell$ to $\ell$ with step one, i.e. we have $-\ell\leq m,n\leq\ell$
with $\ell-m, \ell-n\in\mathbb{N}_{0}.$ For $\ell \in \tfrac{1}{2} \mathbb{N}_0$ we have
$$
\jp{\ell} := \jp{\textsf{t}^\ell} = \sqrt{1+\ell(\ell+1)}. 
$$

Consider the operator
\begin{equation}\label{operatorPsu2}
Pu:=\partial_0 u - qu-p\bar{u},
\end{equation}
where $\partial_0$ is the neutral operator, and $q,p \in \mathbb{C}$, with $p \neq 0$. As mentioned in Example \ref{ex-su2}, there is no loss of generality assuming that the vector field is $\partial_0$ because all vector fields define on $\textrm{SU}(2)$ can be conjugated to $\partial_0$. We have that
$$
\sigma_{\partial_0}(\ell)_{mn}=im\delta_{mn},
$$
for all $\ell\in \frac{1}{2} \mathbb{N}_0$. Hence, we obtain
\begin{align*}
\widehat{Pu}(\ell)_{mn} &= (im-q)\widehat{u}(\ell)_{mn}- p \widehat{\bar{u}}(\ell)_{mn}\\
&=(im-q)\widehat{u}(\ell)_{mn} - p \overline{\widehat{{u}}(\bar{\ell})_{mn}},
\end{align*}
where $\bar{\ell} := \overline{\textsf{t}^\ell}$. By the properties of representation on $\textrm{SU}(2)$, we have that
$$
\overline{\textsf{t}^\ell(x)}_{mn}=(-1)^{m-n}\textsf{t}^\ell(x)_{-m-n}.
$$
Hence,
\begin{align*}
\widehat{u}(\bar{\ell})_{mn} = \int_{\textrm{SU}(2)} u(x)\overline{\overline{\textsf{t}^\ell(x)_{nm}}} \textrm{d}x &= \int_{\textrm{SU}(2)} u(x) \overline{(-1)^{m-n}\textsf{t}^\ell(x)_{-m-n}} \textrm{d}x\\&= (-1)^{m-n}\widehat{u}(\ell)_{-m-n}
\end{align*}
Therefore,
\begin{equation}\label{exem-su2}
\widehat{Pu}(\ell)_{mn}=(im-q)\widehat{u}(\ell)_{mn}- p(-1)^{m-n}\overline{\widehat{{u}}({\ell})_{-m-n}}
\end{equation}
From \eqref{delta-def} and \eqref{relation-sol} we obtain
\begin{align}
\Delta(\ell)_m \widehat{u}(\ell)_{mn}&=(im-\bar{q})\widehat{Pu}(\ell)_{mn} +p\overline{\widehat{Pu}(\bar{\ell})_{mn}}\nonumber
\\
&=(im-\bar{q})\widehat{Pu}(\ell)_{mn} +p(-1)^{m-n}\overline{\widehat{Pu}(\ell)_{-m-n}},\label{delta-su2}
\end{align}
where 
$$
\Delta(\ell)_m = -m^2+|q|^2-|p|^2-i2m\textrm{Re}(q).
$$
\begin{proposition}
	Either $\Delta(\cdot)_\cdot$ vanishes for infinitely many representations or $\Delta(\cdot)_\cdot$ never vanishes.
\end{proposition}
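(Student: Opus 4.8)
The key observation I would build on is structural: the quantity
$$
\Delta(\ell)_m = -m^2+|q|^2-|p|^2-i2m\,\mathrm{Re}(q)
$$
displayed just above depends only on the index $m$ and not at all on the representation label $\ell$. Since one and the same index $m$ is admissible for infinitely many representations of $\SU$, a single vanishing of $\Delta$ must already force infinitely many, and the dichotomy in the statement is exactly the contrapositive of this.

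Concretely, the plan is to argue by contraposition. Assume $\Delta$ vanishes somewhere, say $\Delta(\ell_0)_{m_0}=0$ for some $\ell_0\in\tfrac12\mathbb{N}_0$ and some admissible index $m_0$, i.e. $-\ell_0\le m_0\le\ell_0$ and $\ell_0-m_0\in\mathbb{N}_0$ (so in particular $\ell_0\ge|m_0|$). I would then show that $m_0$ remains admissible for every representation $\textsf{t}^\ell$ with $\ell\in\ell_0+\mathbb{N}_0$: indeed $\ell\ge\ell_0\ge|m_0|$ and
$$
\ell-m_0=(\ell-\ell_0)+(\ell_0-m_0)\in\mathbb{N}_0 .
$$
Because $\Delta(\ell)_{m_0}$ does not involve $\ell$, this gives $\Delta(\ell)_{m_0}=\Delta(\ell_0)_{m_0}=0$ for all such $\ell$, and since $\ell_0+\mathbb{N}_0$ is infinite, $\Delta$ vanishes at infinitely many representations $[\textsf{t}^\ell]$. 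Equivalently, if $\Delta$ does not vanish at infinitely many representations, it never vanishes.

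I do not expect any real obstacle here; this is a structural remark rather than a deep fact. The only thing that needs a line of verification is the elementary bookkeeping of the admissibility constraints $-\ell\le m_0\le\ell$ and $\ell-m_0\in\mathbb{N}_0$, which the displayed computation above handles. The payoff is that in the subsequent $\SU$ analogues of Proposition \ref{lemma-nec} and Theorems \ref{gh-nec} and \ref{GS} one may legitimately split into the two mutually exclusive cases ``$\Delta$ vanishes infinitely often'' and ``$\Delta$ never vanishes'' without worrying about an intermediate regime of finitely many zeros.
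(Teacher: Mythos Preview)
Your argument is correct and essentially identical to the paper's: both exploit that $\Delta(\ell)_m$ is independent of $\ell$, fix a zero at some $(\ell_0,m_0)$, verify that $m_0$ stays admissible for every $\ell\in\ell_0+\mathbb{N}_0$ via $\ell-m_0=(\ell-\ell_0)+(\ell_0-m_0)\in\mathbb{N}_0$, and conclude $\Delta(\ell)_{m_0}=0$ for all such $\ell$. The only cosmetic difference is that you frame it explicitly as a contrapositive, while the paper states the implication directly.
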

\begin{proof} 
Let $\kappa \in \frac{1}{2}\mathbb{N}_0$ and $-\kappa \leq r \leq \kappa$ such that $\kappa-r \in \mathbb{Z}$. Hence, we obtain
$$
\Delta(\kappa)_r = -r^2+|q|^2-|p|^2-i2r\textrm{Re}(q)= \Delta(\kappa+j)_r,
$$
for all $j \in \mathbb{N}$, because $-(\kappa+j) \leq -\kappa \leq r \leq \kappa \leq \kappa+j$ and $(\kappa+j)-r = (\kappa-r)+j \in \mathbb{Z}$. Thus, if $\Delta(\kappa)_r=0$ then $\Delta(\kappa+j)_r=0$, for all $j\in \mathbb{N}$. This means that once $\Delta(\cdot)_\cdot$ vanishes, it will vanish for infinitely many other representations. 
\end{proof}

\begin{proposition}\label{delta-eq}
	We have $\Delta(\ell)_m \neq 0$, for all $\ell \in \frac{1}{2}\mathbb{N}_0$, $-\ell \leq m \leq \ell$, $\ell -m \in \mathbb{N}_0$ if and only if one of the following statements holds:
\begin{enumerate}
\item $|p|>|q|$;
\item $|p|<|q|$ and $\mathrm{Re}(q) \neq 0$;
\item $\exists M>0$ such that
$$
\jp{\ell} \geq M \implies 	\Big|m^2-(|q|^2-|p|^2)\Big| \geq \jp{\ell}^{-M},
$$
for all $-\ell\leq m \leq \ell$.
\end{enumerate}
\end{proposition}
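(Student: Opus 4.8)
The plan is to establish both implications by adapting, with considerable simplification, the arguments behind Theorems~\ref{thm-suf} and~\ref{gh-nec}. The feature that makes the self-dual case tractable is that on $\SU$ the number
\[
\Delta(\ell)_m=-m^2+|q|^2-|p|^2-i2m\,\mathrm{Re}(q)
\]
depends only on the index $m$ and not on $\ell$, and --- as already observed in the proof of the preceding proposition --- an index $m$ admissible for some $\ell_0$ (that is, $-\ell_0\le m\le\ell_0$ with $\ell_0-m\in\mathbb{N}_0$) stays admissible for every $\ell=\ell_0,\ell_0+1,\ell_0+2,\dots$, so the same value of $\Delta$ recurs at arbitrarily large representations.

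For the sufficiency of (1)--(3) I would argue exactly as in the proof of Theorem~\ref{thm-suf}. If $|p|>|q|$, then $|\Delta(\ell)_m|\ge|\mathrm{Re}(\Delta(\ell)_m)|=\big|m^2-(|q|^2-|p|^2)\big|=m^2+|p|^2-|q|^2\ge|p|^2-|q|^2>0$. If $|p|<|q|$ and $\mathrm{Re}(q)\ne0$, split according to whether $m^2\le\tfrac12(|q|^2-|p|^2)$, in which case $|\mathrm{Re}(\Delta(\ell)_m)|\ge\tfrac12(|q|^2-|p|^2)>0$, or $m^2>\tfrac12(|q|^2-|p|^2)$, in which case $m\ne0$ and $|\mathrm{Im}(\Delta(\ell)_m)|=2|\mathrm{Re}(q)|\,|m|>0$. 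If (3) holds, suppose for contradiction that $\Delta(\ell_0)_{m_0}=0$ for some admissible pair; taking real parts gives $m_0^2=|q|^2-|p|^2$, and by the recurrence above $m_0$ is admissible for arbitrarily large $\ell$, for which $\big|m_0^2-(|q|^2-|p|^2)\big|=0<\jp{\ell}^{-M}$ --- contradicting (3). In all three cases $\Delta$ never vanishes.

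For the converse I would argue by contraposition: assuming none of (1)--(3) holds, I would exhibit a representation at which $\Delta$ vanishes. The failure of (1) gives $|p|\le|q|$; if $|p|=|q|$, then $\Delta(\ell)_0=|q|^2-|p|^2=0$ for every $\ell\in\mathbb{N}_0$, and we are done. If $|p|<|q|$, then the failure of (2) forces $\mathrm{Re}(q)=0$, so $\Delta(\ell)_m=-m^2+|q|^2-|p|^2$ is real, and the failure of (3) provides, for each $k\in\mathbb{N}$, a representation $[\textsf{t}^{\ell_k}]$ with $\jp{\ell_k}\ge k$ and an admissible index $m_k$ with $\big|m_k^2-(|q|^2-|p|^2)\big|<\jp{\ell_k}^{-k}$, hence $m_k^2\to|q|^2-|p|^2$. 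Since the set $\{m^2:m\in\tfrac12\mathbb{Z}\}$ is $\tfrac14$-separated and therefore closed and discrete in $[0,\infty)$, the limit equals $m_0^2$ for some $m_0\in\tfrac12\mathbb{N}_0$, and $m_0\ne0$ because $|q|^2-|p|^2>0$; then $\Delta(\ell)_{m_0}=0$ for every $\ell\ge m_0$ with $\ell-m_0\in\mathbb{N}_0$, contradicting the hypothesis.

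I expect the main subtlety to be the sufficiency of (3): in contrast with Theorem~\ref{gh-nec}, condition (3) only constrains $\Delta$ on the representations with $\jp{\ell}\ge M$, so one must rule out a spurious zero of $\Delta$ at a small $\ell$, and it is precisely the $\ell$-independence of $\Delta(\ell)_m$ that forbids this --- such a zero would propagate to arbitrarily large $\ell$ and violate (3). The dual elementary input is the discreteness of $\{m^2:m\in\tfrac12\mathbb{Z}\}$, which in the converse pins the limit $|q|^2-|p|^2$ to an actual value $m_0^2$.
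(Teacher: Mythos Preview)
Your proof is correct and follows essentially the same approach as the paper. The only cosmetic difference is that the paper packages the discreteness observation by introducing an equivalent reformulation $3'$: $\sqrt{|q|^2-|p|^2}\notin\tfrac12\mathbb{N}_0$, and then works directly with $3'$ in the contrapositive, whereas you extract the same content through your limit/discreteness argument on $\{m^2:m\in\tfrac12\mathbb{Z}\}$; both routes exploit the same two facts you identified --- the $\ell$-independence of $\Delta(\ell)_m$ and the $\tfrac14$-separation of the half-integer squares.
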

\begin{proof} We have seen in the proof of Theorem \ref{thm-suf} that both conditions 1. and 2. imply that $\Delta(\ell)_m \neq 0$, for all $\ell \in \frac{1}{2}\mathbb{N}_0$. Notice that condition 3. says that $\mathrm{Re}(\Delta(\ell)_m) \neq 0$, for all $\jp{\ell} \geq M$. In particular we obtain by the last proposition that $\mathrm{Re}(\Delta(\ell)_m) \neq 0$, for all $\ell \in \frac{1}{2}\mathbb{N}_0$, which complete the proof of the sufficiency. 
	
Notice that condition 3 is equivalent to 
\begin{enumerate}
	\item[3'.] $\sqrt{|q|^2-|p|^2} \notin \frac{1}{2}\mathbb{N}_0$.
\end{enumerate} 

Indeed, if 3. holds then  $\mathrm{Re}(\Delta(\ell)_m) \neq 0$, for all $\ell \in \frac{1}{2}\mathbb{N}_0$, which implies 3'. On the other hand, if 3'. holds than there exists $C>0$ such that
$$
|m^2-(|q|^2-|p|^2)|\geq C,
$$
for all $m \in \frac{1}{2}\mathbb{Z}$, which implies 3.

Let us prove now the necessity. If none of these 3 conditions holds, then we have one of the following possibilities:
	\begin{enumerate}[(a)]
	\item $|p|<|q|, \mathrm{Re}(q) =0$, and 3'. does not hold;
	\item $|p|=|q|$.
\end{enumerate}
Notice that the case $|p|=|q|$ implies that 3'. does not hold. If we are in the case (a), then we have that $s=\sqrt{|q|^2-|p|^2} \in \frac{1}{2}\mathbb{N}_0$. Hence, 
$$
\Delta(s)_s = -s^2+|q|^2-|p|^2 - i2s\textrm{Re}(q) = 0,
$$
which contradicts the hypothesis. 
For the case (b), just notice that $\Delta(\ell)_0=0$, for all $\ell \in \mathbb{N}$. 
\end{proof}

Now we can give the sufficient and necessary condition for the global hypoellipticity of the operator $P$ defined on $\SU$.
\begin{theorem}
The operator $P$ is  globally hypoelliptic if and only if $\Delta(\ell)_m \neq 0$, for all $\ell \in \frac{1}{2}\mathbb{N}_0$, $-\ell \leq m \leq \ell$, $\ell -m \in \mathbb{N}_0$.
\end{theorem}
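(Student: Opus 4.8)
The plan is to deduce the equivalence from the characterisation already obtained in this section. By Proposition~\ref{delta-eq}, the condition ``$\Delta(\ell)_m\neq 0$ for all admissible $(\ell,m)$'' is equivalent to one of the three conditions 1--3 listed there, so proving the theorem reduces to two implications: that conditions 1--3 force global hypoellipticity (the sufficiency), and that a single zero of $\Delta$ destroys it (the necessity, argued by contraposition). The sufficiency will mimic the proof of Theorem~\ref{thm-suf}; the necessity will construct an explicit singular solution of $Pu=0$, which is where the self-duality of the representations of $\SU$ forces a change of technique compared with Section~\ref{nec-GH}.

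For the sufficiency I would work from \eqref{delta-su2}. It expresses $\Delta(\ell)_m\,\widehat{u}(\ell)_{mn}$ as a combination of $\widehat{Pu}(\ell)_{mn}$ and $\widehat{Pu}(\ell)_{-m-n}$ with coefficients of modulus at most $\jp{\ell}+|q|$ and $|p|$ respectively (using $|(-1)^{m-n}|=1$ and $|m|\leq\jp{\ell}$). Proposition~\ref{delta-eq} gives, in cases 1 and 2, a bound $|\Delta(\ell)_m|\geq C>0$ valid for every $(\ell,m)$, and in case 3 a bound $|\Delta(\ell)_m|\geq\jp{\ell}^{-M}$ for $\jp{\ell}\geq M$, together with a positive lower bound on the finite complementary set (since $\Delta$ never vanishes). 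If $Pu=f\in C^\infty(\SU)$, then $\widehat{f}$ decays rapidly, and pushing this through the inequality above — absorbing the extra factor $\jp{\ell}^{M+1}$ in case 3 — shows $\widehat{u}$ decays rapidly, so $u\in C^\infty(\SU)$ by \eqref{cinfty}. This is the same estimate already done for Theorem~\ref{thm-suf}, and I would say so rather than repeat it.

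For the necessity, suppose $\Delta(\kappa)_r=0$ for some $\kappa\in\tfrac12\mathbb{N}_0$, $-\kappa\leq r\leq\kappa$, $\kappa-r\in\mathbb{N}_0$. By the proposition preceding Proposition~\ref{delta-eq} we then have $\Delta(\kappa+j)_r=0$ for all $j\in\mathbb{N}$, and since $\Delta(\ell)_{-m}=\overline{\Delta(\ell)_m}$ also $\Delta(\kappa+j)_{-r}=0$. Choose an index $n$ with $-(\kappa+j)\leq n\leq\kappa+j$, $(\kappa+j)-n\in\mathbb{N}_0$ and $(r,n)\neq(-r,-n)$ — for instance $n=\kappa$ when $r\neq 0$ and $n=1$ when $r=0$ (in which case $\kappa\in\mathbb{N}_0$), independently of $j$ — and define $u$ by setting $\widehat{u}(\textsf{t}^{\kappa+j})_{rn}=ir-\bar q$ and $\widehat{u}(\textsf{t}^{\kappa+j})_{-r-n}=(-1)^{r-n}p$ for all sufficiently large $j$, with all other Fourier coefficients of $u$ equal to zero. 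Then $u\in\mathcal{D}'(\SU)$ by \eqref{distr}, since the nonzero entries are bounded by $(1+|q|)\jp{\kappa+j}$. A short computation with \eqref{exem-su2} gives $\widehat{Pu}(\textsf{t}^{\kappa+j})_{rn}=(ir-q)(ir-\bar q)-|p|^2=\Delta(\kappa+j)_r=0$ and, using $(-1)^{n-r}=(-1)^{r-n}$, $\widehat{Pu}(\textsf{t}^{\kappa+j})_{-r-n}=(-1)^{r-n}p\bigl((-ir-q)-(-ir-q)\bigr)=0$; every remaining entry of $\widehat{Pu}$ vanishes because both $\widehat{u}(\cdot)_{mn}$ and $\widehat{u}(\cdot)_{-m-n}$ vanish there. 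Hence $Pu=0\in C^\infty(\SU)$, whereas $|\widehat{u}(\textsf{t}^{\kappa+j})_{-r-n}|=|p|\neq 0$ for all $j$, so $u\notin C^\infty(\SU)$ by \eqref{cinfty}, contradicting global hypoellipticity.

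The only delicate point — and the reason $\SU$ is treated apart from Section~\ref{nec-GH} — is the compatibility of the two prescribed entries. Since $[\textsf{t}^\ell]=[\overline{\textsf{t}^\ell}]$, conjugation is encoded by $\widehat{u}(\bar\ell)_{mn}=(-1)^{m-n}\widehat{u}(\ell)_{-m-n}$, so one cannot set $\widehat{u}$ at $\ell$ and ``at $\bar\ell$'' independently as in Proposition~\ref{lemma-nec}: the pair $(r,n)$, $(-r,-n)$ at the \emph{same} level must satisfy \emph{both} the $(r,n)$- and $(-r,-n)$-rows of the system \eqref{exem-su2}, and the consistency of those two rows is exactly the identity $\Delta(\ell)_r\,\widehat{u}(\ell)_{rn}=0$. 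Checking that the chosen values $ir-\bar q$ and $(-1)^{r-n}p$ meet both rows simultaneously is the single computation I would carry out in full; the rest is either routine or already contained in Theorem~\ref{thm-suf} and Proposition~\ref{delta-eq}.
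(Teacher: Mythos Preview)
Your argument is correct and follows essentially the same route as the paper: sufficiency via Theorem~\ref{thm-suf} and Proposition~\ref{delta-eq}, necessity by exhibiting an explicit singular solution of $Pu=0$ supported on the levels $\ell=\kappa+j$ where $\Delta$ vanishes. The only difference is cosmetic --- the paper splits into the two cases (a) $s>0$ and (b) $|p|=|q|$ of Proposition~\ref{delta-eq} and fills the entire rows $m=\pm s$ (respectively $m=0$) with nonzero Fourier coefficients, whereas you place a single compatible pair of entries at $(r,n)$ and $(-r,-n)$ per level, which unifies the two cases; this is a mild streamlining of the same construction.
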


\begin{proof}
	The sufficiency follows from Theorem \ref{thm-suf} and Proposition \ref{delta-eq}. 
	
	Assume now that $\Delta(\ell)_m=0$ for some $\ell \in \frac{1}{2}\mathbb{N}_0$, $- \ell \leq m \leq \ell, \ell-m\in \mathbb{N}_0$. Let us proceed by contradiction to prove the necessity. Following the proof of Proposition \ref{delta-eq}, we have one of the following possibilities:
	\begin{enumerate}[(a)]
		\item $|p|<|q|, \mathrm{Re}(q) =0$, and $\sqrt{|q|^2-|p|^2} \in \frac{1}{2}\mathbb{N}_0$;
		\item $|p|=|q|$.
	\end{enumerate}
Let us construct a singular solution for $Pu=0$ for both cases. For the case (a), let $s:= \sqrt{|q|^2-|p|^2}$ and consider
$$
\widehat{u}(\ell)_{mn} =
\left\{
\begin{array}{ll}
0,& \text{if } m^2 \neq s^2 \\
is-\bar{q}, & \text{if } m=s\\
p(-1)^{-s-n}, & \text{if } m=-s.
\end{array}
\right. 
$$
Clearly we have $u \in \mathcal{D}'(\SU)$. Moreover, this distribution does not come from a smooth function because  $|\widehat{u}(s+j)_{-sn}| = |p| \neq 0$, for all $j \in \mathbb{N}_0$. By \eqref{exem-su2} we have
$$
\widehat{Pu}(\ell)_{mn}=(im-q)\widehat{u}(\ell)_{mn}- p(-1)^{m-n}\overline{\widehat{{u}}({\ell})_{-m-n}}
$$

Hence, if $m^2 \neq s^2$ we obtain $\widehat{Pu}(\ell)_{mn}=0$. For $m=s$ we have 
\begin{align*}
\widehat{Pu}(\ell)_{sn}&=(is-q)\widehat{u}(\ell)_{sn}- p(-1)^{s-n}\overline{\widehat{{u}}({\ell})_{-s-n}}\\
&=(is-q)(is-\bar{q}) -p(-1)^{s-n}\bar{p} (-1)^{-s+n} \\
&=-s^2+|q|^2-2is\text{Re}(q)-|p|^2\\
&=0.
\end{align*}

For $m=-s$ we have
\begin{align*}
	\widehat{Pu}(\ell)_{-sn}&=(-is-q)\widehat{u}(\ell)_{-sn}- p(-1)^{-s-n}\overline{\widehat{{u}}({\ell})_{s-n}}\\
	&=(-is-q)p(-1)^{-s-n}-p(-1)^{-s-n}(-is-q)\\
	&=0.
\end{align*}

Therefore $Pu=0$, which is a contradiction by the hypothesis about the global hypoellipticity of $P$. 

For the case (b), consider the following sequence of Fourier coefficients:
$$
\widehat{u}(\ell)_{mn} =
\left\{
\begin{array}{ll}
	0,& \text{if } m \neq 0\\
	0,& \text{if } m=0 \text{ and } n=0\\
	-\bar{q}, & \text{if }  m=0 \text{ and }n>0\\
	p(-1)^{-n} & \text{if } m=0 \text{ and } n<0.
\end{array}
\right. 
$$

Similarly to the previous case we have $u \in \mathcal{D}'(\SU) \setminus C^\infty(\SU)$. Again by \eqref{exem-su2}, when $m \neq 0$ we have $\widehat{Pu}(\ell)_{mn}=0$. For $m=0$ and $n>0$ we obtain
\begin{align*}
	\widehat{Pu}(\ell)_{0n}&=(-q)\widehat{u}(\ell)_{0n}- p(-1)^{-n}\overline{\widehat{{u}}({\ell})_{0-n}}\\
	&=(-q)(-\bar{q})-p(-1)^{-n}\bar{p}(-1)^{n}\\
	&=|q|^2-|p|^2 \\
	&=0.
\end{align*}
For $m=0$ and $n<0$, we have
\begin{align*}
	\widehat{Pu}(\ell)_{0n}&=(-q)\widehat{u}(\ell)_{0n}- p(-1)^{-n}\overline{\widehat{{u}}({\ell})_{0-n}}\\
	&=(-q)p(-1)^{-n}-{p}(-1)^{-n}(-q)\\
	&=0.
\end{align*}
Finally, for $m=n=0$ we have
$$
	\widehat{Pu}(\ell)_{00}=(-q)\widehat{u}(\ell)_{00}- p\overline{\widehat{{u}}({\ell})_{00}}=0.
$$
We conclude that $Pu=0$, which contradicts the global hypoellipticity of $P$. 
\end{proof}
\begin{example}
The following operators on ${\SU}$ are globally hypoelliptic:
\begin{enumerate}
	\item $P_1u:= \partial_0u - u - 2\overline{u}, \quad (|p|>|q|)$;
		\item $P_2u:= \partial_0u -2u - i\overline{u}, \quad (|p|<|q| \text{ and } {\textrm{Re}}(q) \neq 0) $;
			\item $P_3u:= \partial_0u - 3i - \overline{u}, \quad (\sqrt{|q|^2-|p|^2} \notin \frac{1}{2}\mathbb{N}_0)$.
\end{enumerate}
The following operators on ${\SU}$  are not globally hypoelliptic:
\begin{enumerate}
	\item $P_au:= \partial_0u -5iu-4\overline{u}, \quad (|p|<|q|, {\textrm{Re}}(q)=0, \text{ and } \sqrt{|q|^2-|p|^2} \in \frac{1}{2}\mathbb{N}_0)$;
	\item $P_bu:= \partial_0u - (1+i)u - \sqrt{2} i\overline{u}, \quad (|p|=|q|)$.
\end{enumerate}
\end{example}

Let us investigate the global solvability of the operator $P$ on $\SU$. By \eqref{delta-su2} we have
$$
\Delta(\ell)_m \widehat{u}(\ell)_{mn}=(im-\bar{q})\widehat{Pu}(\ell)_{mn} +p(-1)^{m-n}\overline{\widehat{Pu}(\ell)_{-m-n}},
$$
so the space of the admissible distributions for the operator $P$ on $\SU$ is given by
$$
\mathbb{E}:= \{ f \in \mathcal{D}'(G) ;  \ \Delta(\ell)_m=0 \implies (im-\bar{q})\widehat{f}(\ell)_{mn} +p(-1)^{m-n}\overline{\widehat{f}(\ell)_{-m-n}}=0, \ -\ell \leq n \leq \ell \}.
$$
Hence, we say that the operator $P$ is globally solvable if for every $f \in \mathbb{E}$ there exists $u \in \mathcal{D}'(\SU)$ such that $Pu=f$.

Given $f \in \mathbb{E}$, notice that when $\Delta(\ell)_m \neq 0$ we can define
\begin{equation}\label{solution-su2}
\widehat{u}(\ell)_{mn} = \Delta(\ell)_m^{-1}\left((im-\bar{q})\widehat{f}(\ell)_{mn} +p(-1)^{m-n}\overline{\widehat{f}(\ell)_{-m-n}}\right)
\end{equation}
and we get $\widehat{Pu}(\ell)_{mn}  = \widehat{f}(\ell)_{mn}$. When $\Delta(\ell)_m \neq 0$ we have at least one of the following:
\begin{itemize}
	\item $\textrm{Im}(\Delta(\ell)_m) \neq 0$.
\end{itemize}

In this case, $2m\textrm{Re}(q) \neq 0$, so $\text{Re}(q) \neq 0$ and 
$$
|\Delta(\ell)_m| \geq | \textrm{Im}(\Delta(\ell)_m)| = |2m\textrm{Re}(q) | \geq |\text{Re}(q)| = C_1 >0 
$$
because $m \in \frac{1}{2}\mathbb{N}_0$ and $m \neq 0$. 
\begin{itemize}
	\item $\textrm{Re}(\Delta(\ell)_m) \neq 0$.
\end{itemize}
Since $m \in \frac{1}{2}\mathbb{N}_0$, there exists $C_2>0$ such that $|-m^2+|q|^2-|p|^2| >C_2$, whenever $m \in \frac{1}{2}\mathbb{N}_0$ satisfying $-m^2+|q|^2-|p|^2 \neq 0$. Hence,
$$
|\Delta(\ell)_m| \geq | \textrm{Re}(\Delta(\ell)_m)| = |-m^2+|q|^2-|p|^2 | \geq  C_2 >0.
$$
Thus, if $\Delta(\ell)_m \neq 0$, there exists $C>0$, which does not depend neither on $\ell$ nor on $m$, such that 
$$
|\Delta(\ell)_m| >C.
$$
Therefore the estimates for $\widehat{u}(\ell)$ defined on \eqref{solution-su2} are similar to the estimates for $\widehat{f}(\ell)$.

We still need to analyze the case where $\Delta(\ell)_m=0$. First, assume that $\Delta(\ell)_s=0$ with $s \neq 0$. Notice that we also have $\Delta(\ell)_{-s}=0$, so there is no loss of generality to suppose that $s>0$. Define
\begin{equation}\label{solutionm0}
	\left\{ 
	\begin{aligned}
		\widehat{u}(\ell)_{sn} &=0; \\
		\widehat{u}(\ell)_{-sn} & = -\bar{p}^{-1}(-1)^{s+n}\overline{\widehat{f}(\ell)_{s -n}},
	\end{aligned}
	\right.
\end{equation}
Let us verify that $\widehat{Pu}(\ell)_{sn}= \widehat{f}(\ell)_{sn}$ and $\widehat{Pu}(\ell)_{-sn}= \widehat{f}(\ell)_{-sn}$, for all $-\ell \leq n \leq \ell.$ We have
\begin{align*}
	\widehat{Pu}(\ell)_{sn} &= (is-q)\widehat{u}(\ell)_{sn} -p(-1)^{s-n} \overline{\widehat{u}(\ell)_{-s-n}}\\
	 &= (is-q)0 +p(-1)^{s-n} \overline{\bar{p}^{-1}(-1)^{s-n}\overline{\widehat{f}(\ell)_{sn}}}\\
	 &=\widehat{f}(\ell)_{s n}
\end{align*}
and
\begin{align*}
	\widehat{Pu}(\ell)_{-sn} &= (-is-q)\widehat{u}(\ell)_{-sn} -p(-1)^{-s-n} \overline{\widehat{u}(\ell)_{s-n}}\\
&= (-is-q)(-\bar{p}^{-1}(-1)^{s+n}\overline{\widehat{f}(\ell)_{s -n}}) +p(-1)^{s-n}0 \\
&=\overline{(is-\bar{q})(-{p}^{-1}(-1)^{s+n}{\widehat{f}(\ell)_{s -n}})} \\
&= \widehat{f}(\ell)_{-sn},
\end{align*}
where the last step follows from the fact that $f \in \mathbb{E}$. Finally, let us consider the case where $\Delta(\ell)_0=0$. Here, we have $|q|^2=|p|^2$, which implies that $q \neq 0$. Define
\begin{equation}\label{solution-su2 2}
\widehat{u}(\ell)_{0n} = 
\left\{
\begin{array}{ll}
	0, & \text{if } n>0;\\
	-(2q)^{-1} \widehat{f}(\ell)_{00},& \text{if } n=0;\\
	-\bar{p}^{-1}(-1)^n\overline{\widehat{f}(\ell)_{0-n}}, & \text{if } n<0.
\end{array}
\right.
\end{equation}
When $n>0$, we have
\begin{align*}
	\widehat{Pu}(\ell)_{0n} &= -q\widehat{u}(\ell)_{0n} -p(-1)^{-n} \overline{\widehat{u}(\ell)_{0-n}}\\
	&= (-q)0 +p(-1)^{-n} \overline{\bar{p}^{-1}(-1)^{-n}\overline{\widehat{f}(\ell)_{0n}}}\\
	&=\widehat{f}(\ell)_{0n}
\end{align*}
On the other hand, when $n<0$, we obtain
\begin{align*}
	\widehat{Pu}(\ell)_{0n} &= -q\widehat{u}(\ell)_{0n} -p(-1)^{-n} \overline{\widehat{u}(\ell)_{0-n}}\\
&= -q(	-\bar{p}^{-1}(-1)^n\overline{\widehat{f}(\ell)_{0-n}})+p(-1)^{-n}0\\
&=\overline{\bar{q}({p}^{-1}(-1)^n\overline{\widehat{f}(\ell)_{0-n}})}\\
&=\widehat{f}(\ell)_{0n},
\end{align*}
because $f \in \mathbb{E}$. Similarly, for $n=0$ we have
\begin{align*}
	\widehat{Pu}(\ell)_{00} &= -q\widehat{u}(\ell)_{00} -p\overline{\widehat{u}(\ell)_{00}}\\
	&= (-q)(-(2q)^{-1} \widehat{f}(\ell)_{00}) +p\overline{(-(2q)^{-1} \widehat{f}(\ell)_{00})}\\
	&=\widehat{f}(\ell)_{00}.
\end{align*}
Therefore given $f \in \mathbb{E}$, we were able to construct $u$ such that $\widehat{Pu}(\ell)=\widehat{f}(\ell)$, for all $\ell \in \frac{1}{2}\mathbb{N}_0$. Moreover, by the expressions \eqref{solution-su2}, \eqref{solutionm0} and \eqref{solution-su2 2}, we conclude that $u$ has  decay (and growth) similar to $f$. Hence, we have proved the following theorem:

\begin{theorem}
The operator $P: \mathcal{D}'(\emph{\SU}) \to \mathcal{D}'(\emph{\SU})$ given by
$$
Pu:=\partial_0 u - qu-p\bar{u},
$$
with $q, p \in \mathbb{C}$, $p \neq 0$, is globally solvable. Moreover, if $f$ is an admissible smooth function, then there exists $u \in C^\infty(\emph{\SU})$ such that $Pu=f$.
\end{theorem}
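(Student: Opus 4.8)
The plan is to produce, for an arbitrary admissible $f\in\mathbb{E}$, an explicit distribution $u$ by prescribing its Fourier coefficients $\widehat u(\ell)_{mn}$ block by block, exactly as in the expressions \eqref{solution-su2}, \eqref{solutionm0} and \eqref{solution-su2 2}, and then to verify two things: that $Pu=f$, and that the prescribed coefficients genuinely come from an element of $\mathcal{D}'(\SU)$ (resp.\ of $C^\infty(\SU)$ when $f$ is smooth). One structural point must be addressed first: since $[\mathsf{t}^\ell]=[\overline{\mathsf{t}^\ell}]$, one cannot assign $\widehat u$ at a representation and at its conjugate independently. Instead the identity $\overline{\mathsf{t}^\ell(x)}_{mn}=(-1)^{m-n}\mathsf{t}^\ell(x)_{-m-n}$ couples the coefficient at index $(m,n)$ with the one at $(-m,-n)$, and since $\Delta(\ell)_{-m}=\overline{\Delta(\ell)_m}$ the vanishing locus of $\Delta$ is symmetric in $m$; thus the pair of indices $(m,-m)$ plays the role that the pair $([\xi],[\overline{\xi}])$ played in Section \ref{sectionGS}, which is what makes the prescription consistent.

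Next I would split according to whether $\Delta(\ell)_m=0$. On the set where $\Delta(\ell)_m\neq 0$ I set $\widehat u(\ell)_{mn}$ by dividing \eqref{delta-su2} through by $\Delta(\ell)_m$, i.e.\ formula \eqref{solution-su2}; substituting into \eqref{exem-su2} gives $\widehat{Pu}(\ell)_{mn}=\widehat f(\ell)_{mn}$ at once. On the set where $\Delta(\ell)_m=0$ with $m=s>0$ (hence also $\Delta(\ell)_{-s}=0$) I put $\widehat u(\ell)_{sn}=0$ and $\widehat u(\ell)_{-sn}=-\bar p^{-1}(-1)^{s+n}\overline{\widehat f(\ell)_{s-n}}$ as in \eqref{solutionm0}; the identity $\widehat{Pu}=\widehat f$ on this block is a short computation that, for the $(-s,n)$ entry, invokes the admissibility relation defining $\mathbb{E}$. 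Finally, if $\Delta(\ell)_0=0$ then necessarily $|p|=|q|$, so $q\neq 0$, and I use \eqref{solution-su2 2}, the only new feature being that one divides by $2q$ for the $n=0$ entry; once more $\widehat{Pu}=\widehat f$ follows from the compatibility condition. All of these verifications are the routine algebra already displayed, so in the write-up I would simply refer back to them.

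The real content — and the step I expect to be the only genuine obstacle — is the a priori estimate giving $u\in\mathcal{D}'(\SU)$. The key observation is that, in contrast to the general compact group situation of Theorem \ref{GS} where condition 3 had to be \emph{assumed}, on $\SU$ the eigenvalues $\lambda_m(\ell)=m$ range over the \emph{discrete} set $\tfrac12\mathbb{Z}$, so $\Delta$ cannot approach $0$ without being $0$. Concretely: if $\mathrm{Im}(\Delta(\ell)_m)\neq 0$ then $|\Delta(\ell)_m|\geq|2m\,\mathrm{Re}(q)|\geq|\mathrm{Re}(q)|$ because $m$ is then a nonzero half-integer; if $\mathrm{Re}(\Delta(\ell)_m)\neq 0$ then $|\Delta(\ell)_m|\geq|m^2-(|q|^2-|p|^2)|\geq C_2$ for a constant $C_2=C_2(p,q)>0$, since $m^2$ takes values in a discrete set; hence whenever $\Delta(\ell)_m\neq 0$ one has $|\Delta(\ell)_m|>C$ with $C=C(p,q)>0$ independent of $\ell$ and $m$. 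Feeding this bound, together with $|m|\leq\jp{\ell}$, into \eqref{solution-su2}, \eqref{solutionm0} and \eqref{solution-su2 2} shows that each $|\widehat u(\ell)_{mn}|$ is dominated by $C'\jp{\ell}$ times a finite sum of moduli of Fourier coefficients of $f$; therefore the polynomial-growth estimate \eqref{distr} for $f$ transfers to $u$, so $u\in\mathcal{D}'(\SU)$, and the rapid-decay estimate \eqref{cinfty} transfers in the same way, so $u\in C^\infty(\SU)$ whenever $f$ is a smooth admissible function. This yields both assertions of the theorem.
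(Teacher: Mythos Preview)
Your proposal is correct and follows essentially the same route as the paper: the paper's argument, given in the discussion immediately preceding the theorem, constructs $\widehat u(\ell)_{mn}$ via the same three formulas \eqref{solution-su2}, \eqref{solutionm0}, \eqref{solution-su2 2}, verifies $\widehat{Pu}=\widehat f$ case by case using the admissibility of $f$, and then uses the discreteness of $m\in\tfrac12\mathbb{Z}$ to obtain the uniform lower bound $|\Delta(\ell)_m|\geq C>0$ whenever $\Delta(\ell)_m\neq 0$, exactly as you outline. Your explicit remark that the pair $(m,-m)$ plays the role of $([\xi],[\overline{\xi}])$ is a useful clarification of why the prescription is consistent despite the self-duality of the representations.
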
 
\section*{Acknowledgements}
The author would like to thank Alexandre Kirilov and Ricardo Paleari da Silva for comments and suggestions.
 \section*{Conflict of interest}

 The authors declare that they have no conflict of interest.
\bibliographystyle{abbrv}
\bibliography{biblio}
\end{document}